\documentclass[11pt,a4paper]{article}
\usepackage{latexsym}
\usepackage{amssymb}
\usepackage{amsthm}
\usepackage{xcolor}
\usepackage{multicol}
\usepackage{amsmath}
\title{Weak solutions to the quaternionic Monge-Amp\`ere equation}
\author{Marcin Sroka}
\date{}

\usepackage{geometry}
\geometry{
 a4paper,
 total={170mm,257mm},
 left=25mm,
 right=25mm,
 top=25mm,
 bottom=25mm
 }

\newtheorem{theorem}{Theorem}
\newtheorem*{theorem*}{Theorem}
\newtheorem{lemma}{Lemma}

\newtheorem{proposition}{Proposition}

\newtheorem*{definition*}{Definition}
\newtheorem{remark}{Remark}
\newcommand{\hh}{\mathbb{H}}
\newcommand{\rr}{\mathbb{R}}
\newcommand{\cc}{\mathbb{C}}
\newcommand{\hn}{\mathbb{H}^n}

\newcommand{\cn}{\mathbb{C}^n}
\newcommand{\ii}{\mathfrak{i}}
\newcommand{\jj}{\mathfrak{j}}
\newcommand{\kk}{\mathfrak{k}}

\newcommand{\bpar}{\overline{\partial}}
\newcommand{\jpar}{\partial_J}
\newcommand{\psh}{\mathcal{PSH}}
\newcommand{\qpsh}{\mathcal{QPSH}}
\newcommand{\leb}{\mathcal{L}}
\begin{document}
\maketitle
\textbf{Abstract:} We solve the Dirichlet problem for the quaternionic Monge-Amp\`ere equation with a continuous boundary data and the right hand side in $L^p$ for $p>2$. This is the optimal bound on $p$. We prove also that the local integrability exponent of quaternionic plurisubharmonic functions is two which turns out to be less than an integrability exponent of the fundamental solution. 

\textbf{MSC2010:} 32U, 35D30, 35J60

\textbf{Key words:} Monge-Amp\`ere equation, pluripotential theory, qauternionic plurisubharmonic functions

\section{Introduction}
Pluripotential theory, initiated in the seminal papers of Bedford and Taylor \cite{BT76, BT82}, has become a powerful tool for solving problems in complex analysis and geometry. It has been generalized in many directions in the last decade. The most general setting are calibrated geometries, this was extensively studied in a long series of papers by Harvey and Lawson, cf. \cite{HL09b}. Even before that the basics of pluripotential theory in $\hn$ were recreated by Alesker, cf. \cite{A03a}, and more generally on hypercomplex manifolds by Alesker and Verbitsky, cf. \cite{AV06}. In this paper we wish to concentrate on the flat space $\hn$. 

The short historical overview is as follows. Quaternionic plurisubharmonic functions in $\hn$ and their basic properties were investigated in \cite{A03a}. Inspired by \cite{BT76} Alesker developed there the foundations of pluripotential theory in the quaternionic setting showing among other things that a quaternionic Monge-Amp\`ere operator defined for smooth functions as the Moore determinant, cf. \cite{M22}, of a quaternionic Hessian can be extended to the class of continuous functions. In \cite{A03b} he solved the Dirichlet problem in a quaternionic strictly pseudoconvex domain $\Omega \subset \hn$ with a continuous boundary data and the Monge-Amp\`ere mass continuous up to the boundary. Only recently Wan, cf. \cite{W18}, obtained another results in this direction. Following the approach of Ko\l odziej from \cite{K95, K05} she proved that the Dirichlet problem admits a bounded solution provided the right hand side is a finite Borel measure and a subsolution to the problem exists. Motivated by reasoning presented in \cite{CP92} and using comparison of real and quaternionic Monge-Amp\`ere operators she showed existence of continuous solutions to the Dirichlet problem for densities in $L^q$, $q \geq 4$. To sum up the strongest known result concerning existence of a continuous solution to the Dirichlet problem with a degenerate right hand side is as follows
\begin{theorem*} Suppose $\Omega \subset \hn$ is a quaternionic strictly pseudoconvex domain and $f \in L^q(\Omega)$ for $q \geq 4$ is a non negative function. Then the Dirichlet problem \begin{center} $\begin{cases} u \in \qpsh(\Omega) \cap C(\overline{\Omega}) \\ (\partial \jpar u)^n = f \Omega_{n} \\ u_{|\partial \Omega}=\phi \in C(\partial \Omega) \end{cases}$ \end{center} has a unique solution. \end{theorem*}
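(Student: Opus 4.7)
I would follow the strategy of Wan \cite{W18}, which adapts to the quaternionic setting the comparison argument of Cegrell-Persson \cite{CP92}: rather than running Ko\l odziej-type \cite{K95, K05} capacity estimates inside the quaternionic pluripotential theory, one reduces the Dirichlet problem for $(\partial \jpar u)^{n}$ to the real Monge-Amp\`ere equation for $u$ viewed as a function on $\rr^{4n}$, where the classical theory applies, and then lifts the existence and continuity back to the quaternionic side.

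The first step is a pointwise comparison of operators. For a $C^{2}$ QPSH function $u$, the quaternionic Hessian is a non-negative hyperhermitian $n \times n$ matrix whose $4n \times 4n$ real realization $H_{\rr}$ is non-negative and satisfies $\det_{\rr} H_{\rr} = (\det_{M} H)^{4}$, where $\det_{M}$ is the Moore determinant. After a quadratic perturbation $u \mapsto u + \varepsilon |\cdot|^{2}$ that renders the full real Hessian $D^{2} u$ non-negative, and after isolating the hyperhermitian block inside $D^{2} u$, an AM-GM inequality should yield, in the distributional sense, a bound of the shape
\begin{equation*}
\det_{\rr} D^{2} u \leq C_{n} \bigl((\partial \jpar u)^{n} / \Omega_{n}\bigr)^{4}.
\end{equation*}
Thus $f \in L^{q}$ translates to a real Monge-Amp\`ere density in $L^{q/4}(\Omega)$, and the hypothesis $q \geq 4$ places this datum exactly in $L^{1}(\Omega)$.

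Second, with the real Monge-Amp\`ere mass in $L^{1}$, I would invoke the Aleksandrov-Bakelman-Pucci maximum principle to get a global bound $\|u\|_{L^{\infty}(\Omega)} \leq C(\|f\|_{L^{q}}, \|\phi\|_{L^{\infty}})$, and a stability estimate for the real equation to control the modulus of continuity in terms of that of $\phi$ and $f$. Approximating $f$ by smooth $f_{j} \to f$ in $L^{q}$ and $\phi$ by smooth $\phi_{j}$, Alesker's smooth solvability \cite{A03b} furnishes solutions $u_{j}$; the a priori estimates and the continuity of the quaternionic Monge-Amp\`ere operator along uniformly convergent sequences \cite{A03a} yield a uniform limit $u$ solving the problem. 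Uniqueness is immediate from the comparison principle for continuous QPSH functions \cite{A03a}.

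The main obstacle is the pointwise comparison of the first step. The real Hessian of a QPSH function is not in general non-negative, so the determinantal inequality demands either a careful regularization or an analysis of the off-block part of $D^{2} u$, and one must argue distributionally since $u$ is only a priori bounded. It is precisely the exponent $4$ coming from $\det_{\rr} H_{\rr} = (\det_{M} H)^{4}$ that forces the threshold $q \geq 4$; any improvement—as the rest of this paper pursues, lowering the optimal exponent to $q > 2$—must abandon the reduction to real Monge-Amp\`ere and work natively within quaternionic pluripotential theory, relying on quaternionic capacity bounds and a sharper integrability exponent for QPSH functions.
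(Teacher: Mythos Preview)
This theorem is not proved in the paper: it is quoted in the introduction as Wan's prior result \cite{W18}, so there is no in-paper proof to benchmark your attempt against. The paper's own contribution is the sharper threshold $q>2$, established in Section~4 by a different mechanism --- comparison of the quaternionic operator with the \emph{complex} (not real) Monge--Amp\`ere operator, which feeds a volume--capacity inequality into Ko\l odziej's scheme. Your sketch is therefore a reconstruction of Wan's argument, and your closing remark that the $q>2$ improvement must abandon the real-MA reduction is exactly right.

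That said, your reconstruction has a genuine gap. The step ``a quadratic perturbation $u\mapsto u+\varepsilon|\cdot|^2$ renders the full real Hessian $D^2u$ non-negative'' is false: quaternionic plurisubharmonicity only forces the hyperhermitian block of $D^2u$ to be non-negative, while the remaining part can have arbitrarily negative eigenvalues, so no fixed $\varepsilon>0$ makes $D^2u\ge 0$. Consequently $\det_{\rr}D^2u$ is not a non-negative density for the solution $u$, and the ABP maximum principle cannot be applied to $u$ directly. The Cegrell--Persson mechanism that Wan actually uses runs in the opposite direction: one solves the \emph{real} Dirichlet problem $\det D^2v=c_n f^4$, $v|_{\partial\Omega}=\phi$ (here is where $f^4\in L^1$, i.e.\ $q\ge 4$, enters, via \cite{RT77}); the convex solution $v$ is automatically QPSH, and for a \emph{convex} function the determinant comparison reads $(\det_M\mathrm{Hess}(v,\hh))^4\ge c_n\det_{\rr}D^2v$, giving $(\partial\jpar v)^n\ge f\,\Omega_n$. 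Thus $v$ is a continuous \emph{subsolution}, and existence of the actual solution follows from Wan's subsolution theorem, not from real-MA estimates on $u$ itself.
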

The regularity of solutions (except for a ball which was discussed earlier by Alesker in \cite{A03b}) was proven by Zhu, cf. \cite{Z17}. More precisely using the ideas presented in \cite{CKNS85} he proved the following result 
\begin{theorem*} For a quaternionic strictly pseudoconvex domain $\Omega \subset \hn$, $f \in C^\infty(\overline{\Omega}\times \rr)$ a positive function such that $f_x$ is nonnegative on $\overline{\Omega}\times \rr$ and $\phi \in C^\infty(\partial \Omega)$ the Dirichlet problem  \begin{center} $\begin{cases} u \in \qpsh(\Omega) \cap C^\infty(\overline{\Omega}) \\ \det (\frac{\partial^2 
{u}(q)}{\partial \bar{q_\alpha}\partial q_\beta})_{\alpha, \beta \in \{1,...,n \}} = f(q,u(q)) \:  in \: \Omega \\ {u}_{|\partial \Omega}= \phi \end{cases}$ \end{center} has a unique smooth solution.\end{theorem*}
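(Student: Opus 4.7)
The plan is to follow the method of continuity combined with a priori estimates up to the boundary, adapting the classical Caffarelli--Kohn--Nirenberg--Spruck scheme \cite{CKNS85} to the quaternionic setting. Choose a smooth strictly quaternionic plurisubharmonic defining function $\rho$ of $\Omega$, and for $t\in[0,1]$ consider a family
\[
\det\Bigl(\frac{\partial^2 u^t}{\partial \bar{q}_\alpha \partial q_\beta}\Bigr) = F^t(q,u^t), \qquad u^t_{|\partial\Omega}=\phi^t,
\]
interpolating between a trivial equation solved by a large multiple of $\rho$ at $t=0$ and the target equation at $t=1$. Let $S\subset[0,1]$ be the set of parameters for which a smooth $\qpsh$ solution exists. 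The set is nonempty by construction. Openness follows from the implicit function theorem in $C^{2,\alpha}(\overline\Omega)$ applied to the linearization $Lv = U^{\alpha\beta} v_{\bar{q}_\alpha q_\beta} - F^t_u\, v$, where $U^{\alpha\beta}$ is the cofactor of the quaternionic Hessian of $u^t$; the hypothesis $f_x\geq 0$ makes the zeroth-order coefficient nonpositive, so $L$ satisfies the maximum principle and is invertible on $C^{2,\alpha}_0(\overline\Omega)$. The whole argument reduces to the closedness of $S$, i.e.\ to uniform a priori estimates $\|u^t\|_{C^{2,\alpha}(\overline\Omega)}\leq C$.

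The a priori estimates are carried out in the usual cascade. The $C^0$ bound is obtained by comparing $u^t$ with the subsolution $A\rho$ and with the harmonic extension of the boundary data. The $C^1$ bound is produced in the interior by a Bernstein-type maximum principle applied to $|\nabla u^t|^2 e^{\eta(u^t)}$; on $\partial\Omega$ tangential derivatives come from differentiating the boundary condition, while the normal derivative is controlled by sub- and supersolution barriers built from $\rho$. The decisive step is the $C^2$ estimate. In the interior one applies the maximum principle to an auxiliary quantity of the form $e^{\psi(|\nabla u^t|^2)}\lambda_{\max}\bigl(\tfrac{\partial^2 u^t}{\partial\bar{q}_\alpha\partial q_\beta}\bigr)$, using the concavity of $\log\det$ on the cone of positive quaternionic hermitian matrices (a consequence of the corresponding property of the Moore determinant, cf.\ \cite{A03a, M22}) to absorb bad terms. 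On the boundary the pure tangential second derivatives are read off from $\phi\in C^\infty(\partial\Omega)$; the mixed tangential--normal derivatives are obtained by testing $\pm\partial_t(u^t-\phi^t)$ against carefully designed quaternionic barriers supported by $\rho$, in the spirit of \cite{CKNS85}; and the double normal second derivative is recovered from the equation itself once every other entry of the Hessian is under control.

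Once a uniform $C^2$ bound is available the equation is uniformly elliptic with concave structure, so the Evans--Krylov theorem gives an interior $C^{2,\alpha}$ bound and its boundary version, due to Krylov, extends it up to $\partial\Omega$. Differentiating the equation and applying linear Schauder theory iteratively then bootstraps to $C^\infty$, closing the continuity method; uniqueness is standard via the maximum principle applied to $L$ between two solutions. The main obstacle is the boundary $C^2$ estimate: transplanting the barrier construction of \cite{CKNS85} to $\hn$ requires that the algebraic identities governing the Moore determinant and the operator $\partial\jpar$ mimic those of the complex Monge--Amp\`ere operator closely enough for the barrier inequalities to survive, and verifying this point by point is the delicate heart of the argument.
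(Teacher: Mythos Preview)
The paper does not actually prove this theorem: it is quoted in the introduction as a background result due to Zhu \cite{Z17}, with the remark that his argument follows the ideas of \cite{CKNS85}. There is therefore no ``paper's own proof'' to compare your attempt against.

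That said, your proposal is entirely consistent with what the paper reports about Zhu's method: the continuity method together with the CKNS cascade of a priori estimates ($C^0$, $C^1$, boundary $C^2$ via barriers, interior $C^2$, then Evans--Krylov and Schauder bootstrapping). As a sketch of how such a proof goes, your outline is accurate and hits the right structural points, including correctly identifying the boundary $C^2$ estimate as the crux and the role of the monotonicity hypothesis $f_x\geq 0$ in the linearization. If you want a detailed verification of the barrier constructions and the algebraic identities for the Moore determinant that make the CKNS machinery go through in $\hn$, you should consult \cite{Z17} (and \cite{A03b} for the case of a ball) directly, since the present paper does not reproduce those computations.
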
 In the meantime  quaternionic pluripotential theory was further developed in \cite{WZ15, WK17, WW17} of what we will make an extensive use. Contents of those papers will be discussed below in more details.   
For results concerning Dirichlet problems in this more general approach of Harvey and Lawson one can consult \cite{HL09a} for the flat case, \cite{HL11} for manifolds and \cite{HL18} for a degenerate case. 

In this note we are interested in finding weak solutions to the Dirichlet problem for the quaternionic Monge-Amp\`ere operator in $\hn$ with a more degenerate right hand side and a continuous boundary data. It turns out to be possible whenever densities are in $L^p$ for $p>2$ and the exponent is optimal as we show.  To do that we follow the approach of Ko\l odziej from his papers \cite{K96, K98}. Probably the most interesting results are these which actually allow us to apply his method of proof. Among them is comparison of a quaternionic capacity and volume (Lebesgue measure). We prove it in the quaternionic setting coupling two things. Firstly the trick of Dinew and Ko\l odziej from \cite{DK14} which allowed them to show similar comparison for the capacity related to a complex Hesssian equation in $\cn$. It reduces to noting that although plurisubharmonic functions are rare among $m-$subharmonic ones still they realize this $m-$Hessian capacity. Secondly the fact that is interesting in its own right namely the comparison of complex and quaternionic Monge-Amp\`ere operators. To our knowledge it was not know or exploited before and rely on the observation that the Moore determinant of a hyperhermitian matrix is in fact the Pfaffian of an associated complex matrix. Afterwards we obtain an $L^\infty$ estimate for the solutions. The last step before proving the main theorem is stability of solutions in terms of their densities and boundary data but here the proofs are more standard. All of this is done in Section 4. In Section 3 we discuss the problem of finding the local integrability exponent for quaternionic plurisubharmonic functions. Proof of the main theorem there is inspired by the one presented in \cite{H07} for plurisubharmonic function in $\cn$. It turns out that the class of quaternionic plurisubharmonic functions exhibit an unusual property in this context namely the integrability exponent of a general function is two which is smaller than $2n$ occurring for a fundamental solution. This phenomenon can be excluded assuming boundedness of the function near the boundary of a domain what is proven in Section 4.

\bigskip

\textbf{Acknowledgments:} We wish to express our gratitude to S. Ko\l odziej for his guidance and many helpful suggestions. We are greatly indebted to S. Dinew for stimulating discussions which made the presentation of the paper much clearer. This research was partially supported by NCBiR project \textit{Kartezjusz} POWR.03.02.00-00-I001/16-00 and the National Science Center of Poland grant number 2017/27/B/ST1/01145.

\section{Preliminaries}
General references for quaternionic linear algebra and basic properties of quaternionic plurisubharmonic functions are \cite{A03a, A03b, AV06} while for quaternionic pluripotential theory \cite{WZ15, WK17, WW17}. Let us fix the notation, for an algebra of quaternions \begin{center} $\hh = \{ x_0+x_1\ii+x_2\jj+x_3\kk \: | \: x_0,x_1,x_2,x_3 \in \rr \}$ \end{center} where $\ii, \jj, \kk$ satisfy quaternionic relations we consider $\hn$ as a \textit{right} quaternionic module. With such a choice we denote by $I, J, K$ the complex structures induced by $\ii, \jj, \kk$ when treating $\hn$ as a flat hypercomplex manifold. We introduce two coordinate systems, \begin{center} $\hn \ni  (q_i)_{i=0}^{n-1} \longmapsto 
(z_j)_{j=0}^{2n-1} \in \cc^{2n}$ \end{center} in such a way that $q_i = z_{2i}+ \jj z_{2i+1}$, this is a holomorphic chart for the complex structure $I$ and \begin{center} $\hn \ni  (q_i)_{i=0}^{n-1} \longmapsto 
(x_j)_{j=0}^{4n-1} \in \rr^{4n}$ \end{center} in such a way that $q_i = x_{4i}+x_{4i+1}\ii +x_{4i+2}\jj +x_{4i+3} \kk$, this is just a real chart. It is easy to see that $z_{j}=x_{2j}+(-1)^j x_{2j+1} \ii$ for $j=0,...,2n-1$. As always $\partial$ and $\bpar$ are the canonical differential operators induced by the complex structure $I$ and $d=\partial + \bpar $, $d^c=\ii(\bpar-\partial)$. We also introduce the twisted differential \begin{center}$\partial_J := J^{-1} \circ \overline{\partial} \circ J$, \end{center} considered in \cite{AV10, V02}, which plays the role of $\bpar$ in the hypercomplex setting (eg. quaternionic Dolbeault or Salamon complex). For its properties we refer to the mentioned papers. Most notably we will only use the following \begin{center} $\jpar: \Lambda^{k,0}_I (\hn) \rightarrow \Lambda^{k+1,0}_I(\hn)$ since $J: \Lambda^{p,q}_I(\hn) \rightarrow \Lambda^{q,p}_I(\hn)$ \\ $\partial \jpar + \jpar \partial = 0$ \\ $\jpar ^2=0$. \end{center} Later on it may happen frequently that we skip the subscript $I$ and understand that $\Lambda^{k,0}(\hn)$ come from considering bedegrees with respect to $I$.
One can check that for a smooth function $u: \hn \rightarrow \rr$ the following formulas hold \begin{center} $\partial u = \sum\limits_{i=0}^{2n-1} (\partial_{z_i} u) dz_i$ \\ $\bpar u = \sum\limits_{i=0}^{2n-1} 
(\partial_{\overline{z_i}} u) d\overline{z_i}$ \\ $\jpar u = \sum\limits_{i=0}^{2n-1} (-1)^{i+1} (\partial_{\overline{z_{i+(-1)^i}}}u) dz_i$ \\ $\partial \bpar u = \sum\limits_{i,j} (\partial_{z_i} \partial_{\overline{z_j}} u) dz_i \wedge d \overline{z_j}$ \\ $\partial \jpar u = \sum\limits_{i,j} \left( (-1)^{j+1} \partial_{z_i} \partial_{\overline{z_{j+(-1)^j}}}u \right) dz_i \wedge dz_j= \sum\limits_{i<j} \left( (-1)^{j+1} \partial_{z_i} \partial_{\overline{z_{j+(-1)^j}}}u - (-1)^{i+1} \partial_{z_j} \partial_{\overline{z_{i+(-1)^i}}} \right) dz_i \wedge dz_j$. \end{center} 
Suppose that $f: \hn \longrightarrow \hh$ is a $\mathcal{C}^2$ function, we define the formal quaternionic derivatives: 

\begin{center} $\frac{\partial f}{ \partial \bar{q_{\alpha}}}= \frac{\partial f}{\partial x_{4\alpha}} + \ii \frac{\partial f}{\partial 
x_{4\alpha+1}} + \jj \frac{\partial f}{\partial x_{4\alpha+2}} + \kk \frac{\partial f}{\partial x_{4\alpha+3}}$ \\ and \\ $\frac{\partial f}{ \partial q_{\alpha}}= \overline{\frac{\partial \bar{f}}{ \partial \bar{q_{\alpha}}}}= \frac{\partial f}{\partial x_{4 \alpha}} - \frac{\partial f}{\partial 
x_{4 \alpha +1}} \ii -  \frac{\partial f}{\partial x_{4 \alpha +2}} \jj - \frac{\partial f}{\partial x_{4 \alpha +3}} \kk$.\end{center}  
Let us observe that for any $f: \hn \longrightarrow \hh$ of class $\mathcal{C}^2$  \begin{center} $\frac{\partial }{ \partial \bar{q_{\alpha}}}\frac{\partial }{ \partial q_{\beta}}=\frac{\partial }{ \partial q_{\beta}}\frac{\partial }{ \partial \bar{q_{\alpha}}}.$ \end{center} Furthermore for a \underline{real} valued $f$ one has 
\begin{center} $\frac{\partial }{ \partial \bar{q_{\alpha}}}\frac{\partial }{ \partial q_{\alpha}}f=\frac{\partial^2 f}{\partial x_{4 \alpha}^2} +  \frac{\partial^2 f}{\partial 
x_{4 \alpha +1}^2} +  \frac{\partial^2 f}{\partial x_{4 \alpha + 2}^2} +  \frac{\partial^2 f}{\partial x_{4 \alpha +3}^2}$ and  $\frac{\partial }{ \partial \bar{q_{\alpha}}} \left(\frac{\partial }{ \partial q_{\beta}}f \right)=\overline{\frac{\partial }{ \partial \bar{q_{\beta}}}\frac{\partial }{ \partial q_{\alpha}}f}$. \end{center} As a consequence the matrix \begin{center} $Hess(f,\hh)= \left( \frac{\partial^2 f}{\partial \bar{q_\alpha}\partial q_\beta} \right)_{\alpha, \beta \in \{1,...,n\}}$ \end{center} is a hyperhermitian matrix for any real valued $f$. The following relations are known to hold for a smooth real valued function $u$ 
\begin{center} 
$(dd^c u)^{2n}=2^{2n}( \ii \partial \bpar u)^{2n}=4^{2n} (2n)! \det(\frac{\partial^2 u}{\partial_{z_i} \partial_{\overline{z_{j}}}}) ( \frac{\ii}{2} dz_0 \wedge d \overline{z_0})\wedge ... \wedge (\frac{\ii}{2} dz_{2n-1} \wedge d 
\overline{z_{2n-1}})$ \\ $(\partial \jpar u)^n=\frac{n!}{4^n} \det (\frac{\partial^2 u}{\partial_{\overline{q_{l}}} \partial_{q_k}})(dz_0 \wedge d z_1 \wedge ... \wedge dz_{2n-2} \wedge d {z_{2n-1}})$ \end{center} where in the last expression $\det$ is the Moore determinant, cf. \cite{M22} for the original definition, of a hyperhermitian matrix. The last formula was computed in \cite{AV06} and, in a different setting, in \cite{WW17}. For further simplifications we introduce some canonical differential forms \begin{center} $\omega_{2n}= \sum\limits_{i=0}^{2n-1} \frac{\ii}{2} dz_{i} \wedge d \overline{ z_{i}}$, $\beta_n=\sum\limits_{i=0}^{n-1} dz_{2i} \wedge d z_{2i+1}$, $\Omega_n = \frac{\beta_n^n}{n!}= dz_0 \wedge d z_1 \wedge ... \wedge dz_{2n-2} \wedge d {z_{2n-1}}$.\end{center} Since we will extensively use facts from pluripotential theory reproved in the quaternionic setting by Wan, Wang and Kang in \cite{WZ15, WW17} it is desirable to compare differential operators $\partial, \jpar$ which we use with their formally defined operators $d_0, d_1$. Those were introduced by D. Wan and W. Wang in \cite{WW17} to which we refer for more details. They consider the following "coordinates"  \begin{center} $z^{j0}=x_{2j} + (-1)^{j+1} x_{2j+1} \ii=\overline{z_j}$ \\ $z^{j1}= (-1)^{j+1} x_{2 \left(j + (-1)^j \right)} + x_{2 \left(j + (-1)^j \right) +1} \ii = (-1)^{j+1} z_{j+(-1)^j} $ \end{center} for $j=0,...,2n-1$ and the associated formal derivatives  \begin{center} $\nabla_{j0}= \partial_{x_{2j}} + (-1)^{j} \partial{x_{2j+1}} \ii = 2 \partial_{\overline{z_j}}$ \\ $\nabla_{j1}= (-1)^{j+1} \partial{x_{2 \left(j + (-1)^j \right)}} - \partial{x_{2 \left(j + (-1)^j \right) +1}} \ii = (-1)^{j+1} 2 \partial_{z_{j+(-1)^{j}}}$. \end{center} Afterwards they fix a complex basis $\omega^0,...,\omega^{2n-1}$ of $\cc^{2n} \approx {\cc^{2n}}^*$ and an associated one $\omega^I=\omega_{i_1}\wedge...\wedge \omega_{i_k}$, for $I=(i_1,...,i_k)$ such that $i_1<...<i_k$ belong to $\{0,...,2n-1\}$, of a complex exterior product $\Lambda^{k}(\cc^{2n}) \approx \Lambda^{k}({\cc^{2n}}^*)$. Finally they define operators 
$$d_i: \Lambda^{k,0}(\hn) \approx C^\infty(\hn,\Lambda^{k}\cc^{2n}) \rightarrow C^\infty(\hn,\Lambda^{k+1}\cc^{2n}) \approx  \Lambda^{k+1,0}(\hn)$$ for $i=0,1$ in the following way. Suppose that $F=\sum\limits_{I}f_I \omega^I$, then 
\begin{center} $d_iF= \sum\limits_{I,k\in \{0,...,2n-1\}} (\nabla_{ki}f_I) \omega^k\wedge\omega^I$. \end{center} From formulas for $\nabla_{ki}$ we obtain 
\begin{center} $d_0F= \sum\limits_{I,k\in \{0,...,2n-1\}} (\nabla_{k0}f_I) \omega^k\wedge\omega^I = \sum\limits_{I,k\in \{0,...,2n-1\}} 2 \left( \partial_{\overline{z_k}}f_I \right) \omega^k\wedge\omega^I$ \\ $d_1F= \sum\limits_{I,k\in \{0,...,2n-1\}} (\nabla_{k1}f_I) \omega^k\wedge\omega^I = \sum\limits_{I,k\in \{0,...,2n-1\}} 2 (-1)^{k+1} \left( \partial_{z_{k+(-1)^k}}f_I \right) \omega^k\wedge\omega^I$. \end{center} 

\begin{proposition} \label{zgodnosc} For the basis $\omega^k=(-1)^{k} dz_{k+(-1)^k}$ \begin{center} $d_0=2 \jpar$, $d_1=-2 \partial$ and $\Delta  = d_0d_1 = 4 \partial \partial_J$. \end{center} \end{proposition}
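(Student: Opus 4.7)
The plan is to verify each of the three identities by a direct computation, acting on a smooth function (and hence on forms, since both sides are first-order derivations of the same type of bidegree). The heart of the matter is purely combinatorial: the map $k \mapsto k+(-1)^k$ on $\{0,\dots,2n-1\}$ is the pair-swapping involution $0\leftrightarrow 1,\ 2\leftrightarrow 3,\dots$, so after substituting $\omega^k=(-1)^k dz_{k+(-1)^k}$ into the definitions of $d_0,d_1$, a single re-indexing $i=k+(-1)^k$ lines the formulas up with the explicit expressions for $\jpar$ and $\partial$ given earlier in the preliminaries.

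For the first identity, I would start from
\[
d_0 f = \sum_{k=0}^{2n-1} 2(\partial_{\overline{z_k}}f)\,\omega^k = \sum_{k=0}^{2n-1} 2(-1)^k(\partial_{\overline{z_k}}f)\,dz_{k+(-1)^k}.
\]
Setting $i = k+(-1)^k$ (so that $k = i+(-1)^i$ by the involution property), a case split on the parity of $i$ shows that $(-1)^k = (-1)^{i+1}$; the sum therefore rewrites as $2\sum_i (-1)^{i+1}(\partial_{\overline{z_{i+(-1)^i}}}f)\,dz_i$, which is exactly $2\jpar f$ by the formula recorded above. To extend from functions to arbitrary $F = \sum_I f_I\,\omega^I$, I use that $\jpar$ is a derivation and that $d_0$ is defined by differentiating coefficients and wedging the extra factor on the left, so the check on coefficients suffices.

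The identity $d_1=-2\partial$ is handled identically. Substituting $\omega^k$ into the definition yields
\[
d_1 f = \sum_k 2(-1)^{k+1}(\partial_{z_{k+(-1)^k}}f)\,(-1)^k dz_{k+(-1)^k} = -\sum_k 2(\partial_{z_{k+(-1)^k}}f)\,dz_{k+(-1)^k},
\]
and the same re-indexing $i = k+(-1)^k$ collapses this to $-2\sum_i (\partial_{z_i}f)\,dz_i = -2\partial f$.

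The final assertion $\Delta = d_0 d_1 = 4\partial\jpar$ is then a formal consequence: $d_0 d_1 = (2\jpar)(-2\partial) = -4\jpar\partial = 4\partial\jpar$, using the anticommutation relation $\partial\jpar+\jpar\partial=0$ stated in the preliminaries. No obstacle is expected beyond careful bookkeeping of signs in the involution $k\mapsto k+(-1)^k$; every step is a direct substitution.
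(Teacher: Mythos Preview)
Your proposal is correct and follows essentially the same route as the paper: substitute $\omega^k=(-1)^k dz_{k+(-1)^k}$ into the defining sums for $d_0,d_1$ and re-index via the involution $k\mapsto k+(-1)^k$ to match the explicit formulas for $\jpar$ and $\partial$. The only cosmetic difference is that the paper carries out the computation directly on a general form $F=\sum_I f_I\,\omega^I$ (after first deriving the action of $\jpar$ on such $F$ from $\jpar=J^{-1}\bpar J$), whereas you check it on functions and extend by the derivation property; both are equally valid.
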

\begin{proof} 

Let us recall that $\jpar=J^{-1} \circ \overline{\partial} \circ J$ and one can check that $J$ acts as \begin{center} $J (d z_{2i+1})=d \overline{z_{2i}}$, $J(d z_{2i})=-d\overline{z_{2i+1}}$ i.e. $J(dz_k)=(-1)^{k+1}d\overline{z_{k+(-1)^k}}$. \end{center} 
As before, for $F=\sum\limits_{I}f_I \omega^I$, we obtain 
\begin{center} $\partial F = \sum\limits_{I,k\in \{0,...,2n-1\}} (\partial_{z_k}f_I) dz_k \wedge\omega^I$ \\ $\:$ \\ $\jpar F = J^{-1} \circ \overline{\partial}(\sum\limits_{I}f_I J(\omega^I))=J^{-1} \left( \sum\limits_{I,k \in \{0,...,2n-1 \}} (\partial_{\overline{z_k}}f_I) d\overline{z_k} \wedge J(\omega^I)  \right) = J^{-1} \left( \sum\limits_{I,k \in \{0,...,2n-1 \}} (\partial_{\overline{z_{k+(-1)^k}}}f_I) d\overline{z_{k+(-1)^k}} \wedge J(\omega^I)  \right)= \sum\limits_{I,k \in \{0,...,2n-1 \}} (\partial_{\overline{z_{k+(-1)^k}}}f_I) J^{-1}(d\overline{z_{k+(-1)^k}}) \wedge \omega^I  = \sum\limits_{I,k \in \{0,...,2n-1 \}} (\partial_{\overline{z_{k+(-1)^k}}}f_I) (-1)^{k+1}dz_{k} \wedge \omega^I$. \end{center} 
This results in \begin{center} $d_0 F = 2 \sum\limits_{I,k\in \{0,...,2n-1\}} \left( \partial_{\overline{z_k}}f_I \right) \omega^k\wedge\omega^I = 2 \sum\limits_{I,k\in \{0,...,2n-1\}} (-1)^{k} \left( \partial_{\overline{z_k}}f_I \right) dz_{k+(-1)^k} \wedge\omega^I = 2 \sum\limits_{I,k\in \{0,...,2n-1\}} (-1)^{k+1} \left( \partial_{\overline{z_{k+(-1)^k}}}f_I \right) dz_{k} \wedge\omega^I = \textbf{2} \jpar F$ \\ $d_1 F = 2 \sum\limits_{I,k\in \{0,...,2n-1\}} (-1)^{k+1} \left( \partial_{z_{k+(-1)^k}}f_I \right) \omega^k\wedge\omega^I  = 2 \sum\limits_{I,k\in \{0,...,2n-1\}} (-1) \left( \partial_{z_{k+(-1)^k}}f_I \right) dz_{k+(-1)^k} \wedge\omega^I =  \textbf{-2} \partial F$. \end{center}  
\end{proof}
\begin{remark}
Let us just emphasize that the choosing of $\partial, \jpar$ over $d_0, d_1$ has some deeper than just conventional meaning. These are the natural intrinsic operators not only in $\hn$ but on any hypercomplex manifold. In fact on an abstract hypercomplex manifold quaternionic plurisubharmonic functions are defined only with their aid, cf. \cite{AV06}, since the local chart definition is not possible due to non-integrability of a generic hypercomplex structure i.e. non-existence of quaternionic charts.
\end{remark}

From Proposition \ref{zgodnosc} it follows that we are able to use all results from \cite{WZ15, WK17, WW17} as well as from \cite{A03a, A03b, AV06}. We just give here the necessary details and refer to the mentioned papers for more of them. The quaternionic plurisubharmonic functions were defined by Alesker in \cite{A03a}. 
\begin{definition*}
Let $\Omega$ be a domain in $\hn$. We call an upper semi-continuous function $f: \Omega \rightarrow \rr$ (strictly) quaternionic plurisubharmonic, $qpsh$ for short, if $f$ restricted to any affine right quaternionic line intersected with $\Omega$ is (strictly) subharmonic as a function on a domain in $\rr^4$. The set of all $qpsh$ functions on $\Omega$ is denoted by $\qpsh(\Omega)$.
\end{definition*} 
\begin{remark} If we fix $t \in \{a \ii + b \jj + c \kk \: | \: a^2 +b^2 +c^2 = 1 \}$ an imaginary unit and consider $\hn$ as a complex vector space where multiplication by $\ii$ is given by a right multiplication by $t$ then $psh$ functions with respect to this complex structure are $qpsh$ since quaternionic lines are complex two planes. We will use that remark only for $t=\ii$ i.e. only for $\hn$ treated as $\cc^{2n}$ via the chart introduced in the begging of the paragraph.\end{remark}
For a smooth function being $qpsh$ is equivalent to $\partial \jpar u \geq 0$ in a quaternionic sense. Let us elaborate on it. The cones of strongly positive $SP^{2k} (\Omega) \subset \Lambda^{2k,0}_\rr(\Omega)$ and positive $\Lambda^{2k,0}_{\rr, \geq 0}(\Omega) \subset \Lambda^{2k,0}_\rr(\Omega)$ forms were introduced in \cite{AV06}, see also \cite{V10} for a careful and extended treatment. Here $\Lambda^{2k,0}_\rr(\Omega) \subset \Lambda^{2k,0}(\Omega)$ is the space of forms $\alpha$ such that $\overline{J(\alpha)}=\alpha$. To introduce them we firstly argue for a point, an element $\Omega_n \in \Lambda^{2n,0}_\rr(\hn \approx T_0 \hn)$ is chosen to be strongly positive and a convex combination of elements of the form $G^*(\Omega_k)$ for $G: \hn \rightarrow \hh^k$ a quaternionic linear map is strongly positive. When the reasoning is applied pointwise we obtain the notion of strong positivity for differential forms in $\Omega$. As always the cone of positive elements is the dual one. We have mentioned above that $(\partial \jpar u)^n$ agree with Moore's determinant of a quaternionic Hessian $Hess(u, \hh)$ for a smooth function, in \cite{A03b} Alesker motivated by \cite{BT76} showed that $(\partial \jpar u)^n$ can be interpreted as a measure for continuous $u$ and proved certain convergence for this operator. It is a cornerstone for having proper pluripotential theory. Later in \cite{WW17} authors proved that $\partial \jpar u$ is a positive current (where positivity is defined using the cone of strongly positive forms) for any $qpsh$ function. What is more important they showed that like in the complex case, cf. \cite{BT82}, one can define $(\partial \jpar u)^n$ for any locally bounded $u$ and treat it as a measure. From there one can recreate most of theorems which hold for $psh$ functions. Among other things they have shown weak convergence of this operator on decreasing sequences of $qpsh$ functions and Chern-Levine-Nirenberg inequalities, cf. \cite{WW17}. In \cite{WZ15} the quaternionic relative capacity is introduced in the spirit of Bedford and Taylor, let $K \subset \Omega$ be a compact set then \begin{center} $cap(K,\Omega)=sup \left\{ \int_K (\partial \jpar u)^n \: | \: u \in \qpsh(\Omega), \: 0 \leq u \leq 1 \right\} $ \end{center} and this can be extended to Borel subsets as well. What is more authors prove quasicontinuity of $qpsh$ functions and most notably the comparison principle which is probably the most powerful tool in pluripotential theory. The statement is exactly as we know it in the complex case but we recall it for reader's convenience. 
\begin{theorem*}{\cite{WZ15}} Let $u,v \in \qpsh(\Omega) \cap L^\infty_{loc}(\Omega)$. If for any $\xi \in \partial \Omega$, \begin{center} $\liminf\limits_{\xi \leftarrow q \in \partial \Omega} (u(q)-v(q)) \geq 0$ \end{center} 
then \begin{center} $\int\limits_{\{u<v\}} (\partial \jpar v)^n \leq \int\limits_{\{u<v\}} (\partial \jpar u)^n $. \end{center} In particular if $ (\partial \jpar v)^n \geq (\partial \jpar u)^n$ as measures then $u \geq v$ in 
$\Omega$.
\end{theorem*}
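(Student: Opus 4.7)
The plan is to adapt the Bedford--Taylor comparison argument from \cite{BT82} to the quaternionic setting, using the pluripotential-theoretic tools of \cite{WW17, WZ15}. Three prerequisites from those works will be essential: $(\partial\jpar w)^n$ is a well-defined nonnegative Borel measure for $w \in \qpsh(\Omega)\cap L^\infty_{loc}(\Omega)$; $\max(u,v)$ is again $qpsh$ and locally bounded; and a Stokes-type integration-by-parts holds for the operators $\partial$ and $\jpar$ acting on such measures.

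First I will perform the standard regularization. Fix $\epsilon>0$ and set $w_\epsilon := \max(u, v-\epsilon)$. The boundary hypothesis forces $\{u \leq v-\epsilon\} \Subset \Omega$, so $w_\epsilon \equiv u$ outside a compact $K_\epsilon \Subset \Omega$. On the open set $\{u<v-\epsilon\}$ one has $w_\epsilon = v-\epsilon$, hence $(\partial\jpar w_\epsilon)^n = (\partial\jpar v)^n$ there; symmetrically, $(\partial\jpar w_\epsilon)^n = (\partial\jpar u)^n$ on $\{u>v-\epsilon\}$.

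Next I will apply the Stokes identity of \cite{WW17}: since $w_\epsilon-u$ is compactly supported in $\Omega$, iterated integration by parts against the mixed wedge expressions built from $\partial\jpar w_\epsilon$ and $\partial\jpar u$ gives, for any relatively compact $U \Subset \Omega$ containing $K_\epsilon$,
\[\int_U (\partial\jpar w_\epsilon)^n = \int_U (\partial\jpar u)^n.\]
Splitting the left side using the previous observation and dropping the nonnegative contribution over $\{u = v-\epsilon\}$ yields
\[\int_{\{u<v-\epsilon\}} (\partial\jpar v)^n + \int_{\{u>v-\epsilon\}} (\partial\jpar u)^n \leq \int_U (\partial\jpar u)^n,\]
so $\int_{\{u<v-\epsilon\}}(\partial\jpar v)^n \leq \int_{\{u\leq v-\epsilon\}}(\partial\jpar u)^n$. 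Letting $\epsilon \searrow 0$, monotone convergence applied to the increasing sets $\{u<v-\epsilon\}\nearrow\{u<v\}$ and $\{u\leq v-\epsilon\}\nearrow\{u<v\}$ gives the asserted inequality. The ``in particular'' statement follows by the usual perturbation trick: if $\{u<v\}$ were nonempty, applying the integral inequality with $v$ replaced by $v+\delta g$ for a bounded strictly $qpsh$ test function $g$ and sufficiently small $\delta>0$ would produce a strict inequality on a nontrivial open subset, contradicting $(\partial\jpar v)^n \geq (\partial\jpar u)^n$.

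The main obstacle is the Stokes identity invoked above. In the complex case it is routine Bedford--Taylor theory, but here $\partial$ and $\jpar$ do not commute and the Monge--Amp\`ere form $(\partial\jpar w)^n$ lives in $\Lambda^{2n,0}(\hn)$ rather than in the top real degree. The required ingredients -- weak continuity of $(\partial\jpar \cdot)^n$ on decreasing sequences of locally bounded $qpsh$ functions, quasicontinuity of $qpsh$ functions, and the legitimacy of wedging with $\partial\jpar w$ under an integral sign -- all rest on the algebraic identities $\partial\jpar+\jpar\partial=0$ and $\jpar^2=0$ from the Preliminaries and have been verified in \cite{WW17, WZ15}. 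Once those facts are cited, the proof is formally parallel to the classical one.
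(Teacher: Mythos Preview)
The paper does not supply its own proof of this statement; it is quoted verbatim from \cite{WZ15} as an established tool of quaternionic pluripotential theory, so there is nothing in the paper to compare your argument against. Your outline is the standard Bedford--Taylor comparison argument transported to the $\partial\jpar$-calculus, which is precisely the route taken in \cite{WZ15}: the reduction via $w_\epsilon=\max(u,v-\epsilon)$, the equality of total masses $\int_U(\partial\jpar w_\epsilon)^n=\int_U(\partial\jpar u)^n$ coming from $w_\epsilon\equiv u$ near $\partial U$ together with the Stokes/integration-by-parts formula for $\partial$ and $\jpar$ established in \cite{WW17}, and the passage $\epsilon\searrow 0$ by monotone convergence are all exactly as in the cited source. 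The perturbation trick you use for the ``in particular'' clause is likewise the customary one. In short, your sketch is correct and coincides with the approach of the reference the paper invokes.
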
 Finally they characterize maximality of a bounded $qpsh$ function in terms of vanishing of its Monge-Amp\`ere mass. Here we mean that $u \in \qpsh(\Omega)$ is maximal if it is above any other $qpsh$ function on compacts $K \subset \Omega$ provided the values of both functions are the same on $\partial K$.

\section{Local integrability of $qpsh$ functions}
In this section we address the question of local integrability of  $qpsh$ functions in a domain $\Omega \subset \hn$. For $psh$ functions it is well know that they are locally integrable with any exponent. 
The proof of the proposition below is inspired by the presentation in \cite{H07}.

\begin{proposition} \label{hor} Suppose $u \in \qpsh(\Omega)$ is such that $u \not \equiv - \infty$. Then $u \in L^p_{loc}(\Omega)$ for any $p<2$ and the bound on $p$ is optimal. What is more if $u_j \not \equiv -\infty$ is a sequence of $qpsh$ functions converging in $L^1_{loc}(\Omega)$ to some $u$, necessarily belonging to $\qpsh(\Omega)$, then convergence holds in $L^p_{loc}(\Omega)$ for any $p<2$. \end{proposition}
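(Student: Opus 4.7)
The strategy is to use that the restriction of any $u\in\qpsh(\Omega)$ to an affine right quaternionic line is subharmonic on a domain in $\rr^4$, and reduce the global $L^p$-bound to a Fubini integration along a foliation by parallel lines. The critical exponent $p=2$ appears because the Newtonian kernel $|x-y|^{-2}$ of the Laplacian in $\rr^4$ lies in $L^p_{\mathrm{loc}}$ precisely for $p<2$.

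To carry this out, I would cover a compact set $K\subset\Omega$ by small cylinders $C=W\times vD$, where $v\in\hn$ is a unit vector, $W\subset(\hh v)^\perp$ is a ball in the $(4n-4)$-dimensional orthogonal complement, and $D\subset\hh$ is a disc, with a slightly larger cylinder $C'=W'\times vD'\Subset\Omega$. Every $w\in C$ is written uniquely as $w=w'+vt$ with $w'\in W$, $t\in D$. Since $u$ is upper semicontinuous and hence bounded above on $\overline{C'}$, subtracting a constant we may assume $u\le 0$ on $C'$. By definition of $\qpsh$, for each $w'\in W'$ the function $t\mapsto u(w'+vt)$ is subharmonic on $D'$. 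Classical Riesz decomposition on $D'$, together with the uniform $L^p(D)$-bound of $|t-s|^{-2}$ for $s\in D'$ when $p<2$, and the standard estimate of the Laplacian mass of a subharmonic $u\le 0$ on $D''\Subset D'$ by $\|u\|_{L^1(D')}$ (test against a cut-off), yields
\[
\|u(w'+v\cdot)\|_{L^p(D)}\le C(p,D,D')\,\|u(w'+v\cdot)\|_{L^1(D')}
\]
with a constant independent of $w'$. Raising to the $p$-th power and integrating over $w'\in W$ by Fubini produces
\[
\|u\|_{L^p(C)}^p\le C\,\|u\|_{L^1(C')},
\]
so $u\in L^p_{\mathrm{loc}}(\Omega)$ for every $p<2$.

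For optimality, I would consider $u(q)=-|q_1|^{-2}$ in $\hn$ with $q=(q_1,\ldots,q_n)$, extended by $-\infty$ on $\{q_1=0\}$. Its restriction to an affine line $\tau\mapsto q_0+v\tau$ is either constant (if $v_1=0$) or the pull-back of the fundamental solution of the $\rr^4$-Laplacian by the bijection $\tau\mapsto q_{0,1}+v_1\tau$, whose linear part is a scaled rotation of $\rr^4\simeq\hh$, and is therefore subharmonic. Thus $u\in\qpsh(\hn)$, and a polar-coordinate computation gives
\[
\int_{B(0,1)}|u|^p\,dV=c_n\int_0^1 r^{3-2p}(1-r^2)^{2(n-1)}\,dr,
\]
which diverges precisely when $p\ge 2$.

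Finally, the inequality $\|u\|_{L^p(C)}\le C\|u\|_{L^1(C')}$ depends only on the cylinders and not on $u$, so if $u_j\to u$ in $L^1_{\mathrm{loc}}$ then $\{u_j\}$ is uniformly bounded in $L^{p'}_{\mathrm{loc}}$ for every $p'<2$. Choosing $p'\in(p,2)$ and $\theta\in(0,1)$ with $1/p=\theta+(1-\theta)/p'$, the interpolation
\[
\|u_j-u\|_{L^p}\le \|u_j-u\|_{L^1}^\theta\,\|u_j-u\|_{L^{p'}}^{1-\theta}
\]
forces $L^p_{\mathrm{loc}}$-convergence. I expect the main technical point to be the uniformity of $C(p,D,D')$ over $w'$ and controlling the set of slices on which $u(w'+v\cdot)\equiv-\infty$; both are handled by standard $\rr^4$-potential theory together with the remark that $\{u=-\infty\}$ has Lebesgue measure zero in $\rr^{4n}$ (since $u$ is subharmonic in $\rr^{4n}$ and hence locally integrable), so by Fubini the degenerate slices form a null set in $W'$.
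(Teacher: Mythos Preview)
Your strategy—slice along a quaternionic line, use the Riesz decomposition and the $L^p_{loc}$ integrability of the $\rr^4$ Newton kernel $|t|^{-2}$ for $p<2$, then Fubini—is the paper's, and your optimality example and interpolation argument for $L^p$-convergence match as well. But there is a real gap in the Fubini step. From the slice estimate $\|u(w'+v\cdot)\|_{L^p(D)}\le C\|u(w'+v\cdot)\|_{L^1(D')}$, raising to the $p$-th power and integrating over $w'$ yields
\[
\|u\|_{L^p(C)}^p \;\le\; C^p\int_W\Bigl(\int_{D'}|u(w'+vt)|\,dt\Bigr)^{p}dw' \;=\; \|g\|_{L^p(W)}^p,
\]
with $g(w')=\int_{D'}|u(w'+vt)|\,dt$, not $C\|u\|_{L^1(C')}$ as you claim. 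There is no bound $\|g\|_{L^p(W)}\le C\|g\|_{L^1(W')}$ for general $g\ge 0$, and the inequality you wrote is not even homogeneous in $u$. The argument can be rescued by observing that $-g$ is again $qpsh$ on $W'\subset\hh^{n-1}$ (averages of a $qpsh$ function in one quaternionic variable remain $qpsh$ in the others) and inducting on $n$, but you do not do this; as written, a single Fubini does not close the estimate, and the later use of the same inequality to get uniform $L^{p'}$-bounds on $u_j$ inherits the gap.

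The paper avoids the problem by proving a sharper one-variable bound: for $u\le 0$ subharmonic on the unit ball of $\rr^4$ with $u(0)>-\infty$, one has $\|u\|_{L^p(B(0,1/2))}\le C_p\,|u(0)|$. The harmonic part is handled by Harnack; for the Green potential the paper shows $\|H(\cdot,\xi)\|_{L^p(B(0,1/2))}\le C_p\,H(0,\xi)$ uniformly in $\xi\in B(0,1)$ via a two-region estimate (direct kernel bound for $|\xi|\le 3/4$, Harnack applied to $H(\cdot,\xi)$ for $|\xi|\ge 3/4$), and then Minkowski's integral inequality gives the claim. This point-value bound iterates cleanly over a polyball: integrating out $q_1$ leaves $C_p^p|u(0,q_2,\dots,q_n)|^p$, and $u(0,\cdot)$ is subharmonic on the next quaternionic line, so repeating gives $\|u\|_{L^p(P(0,1/2))}\le C_p^n|u(0)|$. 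To fix your argument, either switch to this point-value slice estimate, or make the induction on $n$ explicit.
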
  
\begin{proof}
Suppose w.l.o.g. that $u \leq 0$ in a neighborhood of a quaternionic polyball $P(0,1)$ of radius one centered at $0$ contained in $\Omega$, that $u(0)>-\infty$ and fix $p<2$. Let us deal firstly with the case $n=1$. From the Riesz representation theorem, cf. Theorem 3.3.6 in \cite{H07}, $$u(q)=h(g) + 
\int\limits_{ \parallel \xi \parallel <1} G(q,\xi) d \mu(\xi)$$ for some non positive harmonic function $h$ in $B(0,1):=B_1$, non negative Borel measure $\mu$ and Green's function $G(q,\xi) = -\frac{1}{\parallel q- \xi \parallel^2} + \frac{1}{\parallel (q- \frac \xi {|\xi|^2})|\xi| \parallel^2}$. 

By Harnack's inequality, cf. Theorem 3.1.7 in \cite{H07}, for any $\parallel q \parallel < \frac{1}{2}$ we have $$0 \leq -h(q) \leq 
\frac{1+\parallel q \parallel}{(1-\parallel q \parallel)^3}(-h(0)) \leq 12 (-h(0)).$$ This shows that 
$$\parallel h \parallel_{L^p \left(B(0,\frac{1}{2}) \right)} \leq C_p |h(0)|$$ 
for a constant $C_p$, depending only on $p<2$, which we may still need to increase (see below). 

For estimating the second component of the decomposition of $u$ let us introduce the following notation \begin{center} $H(q,\xi)=-G(q, \xi) \geq 0$, for $\xi=0$ we have $ H(q,0)=\frac 1 {\parallel q \parallel^2} - 1$.\end{center} We consider two cases depending on whether $\xi$ is close to the center or to the boundary of $B_1$. 

In the first case, say when $\parallel \xi \parallel \leq \frac 3 4$, we use the estimate \begin{center} $0 \leq H(q,\xi) \leq  \frac{1}{\parallel q - \xi \parallel^{2}}$ \end{center} for any $q$ and $ \xi$, consequently \begin{center} $\left( \: \int\limits_{\parallel q \parallel < \frac{1}{2}} \left( H(q,\xi) \right)^p d \mathcal{L}^4(q) \right)^\frac 1 p \leq \left( \: \int\limits_{\parallel q \parallel < \frac{1}{2}} \frac{1}{\parallel q - \xi \parallel^{2p}} d \mathcal{L}^4(q) \right)^\frac 1 p \leq \left( \: \int\limits_{\parallel q \parallel < \frac{5}{4}} \frac{1}{\parallel q \parallel^{2p}} d \mathcal{L}^4(q) \right)^\frac 1 p \leq C'_p \left( \frac{1}{\parallel \xi \parallel^2}-1 \right)$ \end{center} for a constant $C'_p$ independent of $\xi$ and depending only on $p<2$, since the expression $\frac{1}{\parallel \xi \parallel^2}-1$ is bounded from below for $\parallel \xi \parallel \leq \frac 3 4$.

In the second case, say when $\parallel \xi \parallel \geq \frac 3 4$, we note that for any fixed $\xi$ the function $H(\cdot,\xi)$ is non negative and harmonic in $B_{\frac 3 4}$. Applying Harnack's inequality for each fixed $\xi$ we obtain that for all $\parallel \xi \parallel \geq \frac 3 4$ and for all $\parallel q \parallel < \frac 3 4$  \begin{center} $ 0 \leq H(q,\xi) \leq \left( \frac 3 4 \right)^2 \frac{\frac 3 4 + \parallel q \parallel }{(\frac 3 4 - \parallel q \parallel )^3} H(0,\xi)$, \end{center} hence for all $\parallel \xi \parallel \geq \frac 3 4$ and $\parallel q \parallel < \frac 1 2$ \begin{center} $ 0 \leq H(q,\xi) \leq 45 \left( \frac{1}{\parallel \xi \parallel^2}-1 \right) $.\end{center}

To sum up we have proven that there exists a constant $C_p=\max \{C'_p, 45\}$, independent of $\xi$, such that for $\parallel \xi \parallel < 1$ \begin{center} $\parallel H(\cdot,\xi) \parallel_{L^p \left(B(0,\frac{1}{2}) \right)} \leq C_p \left( \frac{1}{\parallel \xi \parallel^2}-1 \right)$. \end{center}

From Minkowski's inequality and Minkowski's integral inequality we obtain
\begin{center} $\parallel u \parallel_{L^p \left( B(0,\frac{1}{2}) \right)} \leq \parallel h \parallel_{L^p \left(B(0,\frac{1}{2}) \right)} + \left( \int\limits_{\parallel q \parallel < \frac{1}{2}} \mid \int\limits_{ \parallel \xi \parallel <1} H(q,\xi) d \mu(\xi) \mid^p  d \mathcal{L}^4(q) \right)^\frac 1 p \leq C_p |h(0)| + \int\limits_{ \parallel \xi \parallel <1} \left( \int\limits_{\parallel q \parallel < \frac{1}{2}} H(q,\xi)^p d \mathcal{L}^4(q) \right)^\frac 1 p  d \mu(\xi)  \leq C_p \left( |h(0)| + \int\limits_{\parallel \xi \parallel < 1} \left(\frac{1}{\parallel \xi \parallel^2}-1 \right) d \mu(\xi) \right)=C_p |u(0)|$. \end{center} Using Fubini's theorem and the estimate above one obtains that in the case of $n \geq 1$ we have $$\parallel u \parallel_{L^p \left( P(0,\frac{1}{2}) \right)} \leq C_p^n |u(0)|.$$ 

To the end observe that $\Omega'$, the set of points in $\Omega$ in neighborhood of which $u$ is integrable with exponent $p$, is an open set by definition. It is closed by what we have just shown. This is so because if $q \in \overline{\Omega'}$ and $r>0$ is such that $P(q,3r) \subset \subset \Omega$ then we can find and element $q'$ of $\Omega'$ within $\frac r 2$ distance from $q$ and a point $q'' \in \Omega$ within $\frac r 2$ distance from $q'$ such that $u(q'')$ is finite. We note that $P(q'',2r) \subset \subset \Omega$, consequently $u$ is integrable with the exponent $p$ on $P(q'',r)$, and $q \in P(q'',r)$. What is more $\Omega'$ is nonempty by the assumption $u \not \equiv -\infty$. The bound on $p$ is optimal as the example of $-\frac{1}{ \parallel q_0 \parallel ^2}$ in $\hn$ for $n\geq 1$ shows.   

For the proof of the second assertion we note that the sequence $u_j-u$ is bounded in $L^p_{loc}(\Omega)$ for any $1 \leq p < 2$. To prove this it is enough to show that the sequence $u_j$ is bounded in $L^p_{loc}(\Omega)$. Fix any point $q \in \Omega$ and let $r>0$ be such that $P(q,3r) \subset \subset \Omega$. We claim that $L^p$ norms of $u_j$ in $P(q,\frac r 2)$ are bounded. Suppose to the contrary that they are not. Let us choose a subsequence $j_k$ such that $\parallel u_{j_k} \parallel_{L^p(P(q,\frac r 2))} \longrightarrow \infty$. We know that $u_{j_k}$'s are locally uniformly bounded from above, cf. Theorem 3.2.13 in \cite{H07}, so we may assume that $u$ and $u_{j_k}$'s are non positive in $P(q,3r)$. It is possible to find a point $q'$ within $\frac r 2$ distance from $q$ such that $u(q') > - \infty$ and $\limsup\limits_{k \rightarrow \infty}u_{j_k}(q')=u(q')$ since both this properties hold almost everywhere in $\Omega$, cf. Theorem 3.2.13 in \cite{H07}. We assume w.l.o.g that $\lim\limits_{k \rightarrow \infty}u_{j_k}(q')=u(q')$ for if not we take a subsequence again. In particular there exists $C>0$ such that $u_{j_k}(q')>-C$ for any $k$. This together with the estimate we have proven shows that the sequence $\parallel u_{j_k} \parallel_{L^p(P(q',r))}$ is bounded. Because we also know that $P(q,\frac r 2) \subset P(q',r)$ contradiction with $\parallel u_{j_k} \parallel_{L^p(P(q,\frac r 2))} \rightarrow \infty$ is obtained. Fix $1 \leq p <2$ and observe that for any compact $K \subset \Omega$ we have     

\begin{center} $\int_K |u_j-u|^p d \mathcal{L}^{4n} = \int_K |u_j-u|^{\frac{2-p}{2}}|u_j-u|^{\frac{3p-2}{2}} d 
\mathcal{L}^{4n} \leq \left( \int_K |u_j-u|^{(\frac{2-p}{2})(\frac{2}{2-p})} d \mathcal{L}^{4n} \right)^{(\frac{2-p}{2})} \left( \int_K |u_j-u|^{(\frac{3p-2}{2}) (\frac{2}{p})} d \mathcal{L}^{4n} \right)^{\frac{p}{2}}= 
\left( \int_K |u_j-u| d \mathcal{L}^{4n} \right)^{(\frac{2-p}{2})} \left( \int_K |u_j-u|^{(3-\frac{2}{p})} d \mathcal{L}^{4n} \right)^{\frac{p}{2}}$ 
\end{center} 
by H\"older's inequality. By the assumption the first term tends to zero while second one is bounded since $1 \leq 3-\frac{2}{p}<2$. This proves that $u_j$ tend to $u$ in $L^p_{loc}(\Omega)$ for any $1 \leq p < 2$. 
\end{proof}
The following proposition was proven in \cite{WW17}.
\begin{proposition}{\cite{WW17}}
The function $f(q)=-\frac{1}{\parallel q \parallel^2 }$ is a fundamental solution for the quaternionic Monge-Amp\`ere operator in $\hn$. More exactly $$(\partial \jpar f)^n= \frac{2^n \pi^{2n}n!}{(2n)!}\delta_0.$$
\end{proposition}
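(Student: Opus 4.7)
The plan is to prove this by regularization. Set $f_\epsilon(q) := -(\|q\|^2 + \epsilon)^{-1}$ for $\epsilon > 0$; each $f_\epsilon$ is smooth and $qpsh$ on $\hn$, and $f_\epsilon \downarrow f$ as $\epsilon \downarrow 0^+$. The strategy is to compute $(\partial \jpar f_\epsilon)^n$ in closed form, observe that the resulting density vanishes on compact subsets of $\hn \setminus \{0\}$ while carrying an $\epsilon$-independent total mass, and then pass to the limit.

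Set $g := \|q\|^2 = \sum_{j=0}^{2n-1} z_j \bar z_j$. The explicit formulas for $\partial$ and $\jpar$ from Section 2 give $\partial g = \sum_j \bar z_j\, dz_j$, $\jpar g = \sum_{k=0}^{n-1}\bigl(z_{2k}\, dz_{2k+1} - z_{2k+1}\, dz_{2k}\bigr)$, and $\partial \jpar g = 2\beta_n$. Since $f_\epsilon = \phi_\epsilon(g)$ with $\phi_\epsilon(t) = -(t+\epsilon)^{-1}$, the derivation rule for $\partial \jpar$ applied to composites with a scalar function yields $\partial \jpar f_\epsilon = -\frac{2}{(g+\epsilon)^3}\, \partial g \wedge \jpar g + \frac{2}{(g+\epsilon)^2}\, \beta_n$. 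Because $\partial g \wedge \jpar g$ is a wedge of two $(1,0)$-1-forms it squares to zero, so the binomial expansion of $(\partial \jpar f_\epsilon)^n$ keeps only the terms of order $0$ and $1$ in $\partial g \wedge \jpar g$. Combining this with $\beta_n^n = n!\,\Omega_n$ and the algebraic identity $\partial g \wedge \jpar g \wedge \beta_n^{n-1} = (n-1)!\, g\, \Omega_n$ (which I would verify by expanding the double sum for $\partial g \wedge \jpar g$, wedging against each missing pair in $\beta_n^{n-1}$, and noting that only matched pair indices $\{2\ell, 2\ell+1\}$ contribute, summing to $|z_{2\ell}|^2 + |z_{2\ell+1}|^2$), the calculation telescopes to $(\partial \jpar f_\epsilon)^n = \frac{2^n n!}{(g+\epsilon)^{2n+1}}\bigl[(g+\epsilon)-g\bigr]\Omega_n = \frac{2^n n!\,\epsilon}{(\|q\|^2+\epsilon)^{2n+1}}\,\Omega_n$.

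From this explicit density, two things are immediate: it vanishes uniformly on compact subsets of $\hn \setminus \{0\}$ as $\epsilon \to 0$, and under the substitution $q = \sqrt{\epsilon}\, w$ the total mass is $\epsilon$-independent. Polar coordinates in $\rr^{4n}$ evaluate it as $2^n n! \int_{\rr^{4n}} (1 + \|w\|^2)^{-(2n+1)}\, d\mathcal{L}^{4n}(w) = 2^n n! \cdot \frac{2\pi^{2n}}{(2n-1)!} \cdot \frac{1}{4n} = \frac{2^n \pi^{2n} n!}{(2n)!}$, using the Beta integral $\int_0^\infty r^{4n-1}(1+r^2)^{-(2n+1)} dr = \frac{1}{4n}$ and the surface measure of $S^{4n-1}$. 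Since $f_\epsilon \downarrow f$, the convergence theorem for the quaternionic Monge-Amp\`ere on decreasing sequences from \cite{WW17} identifies $(\partial \jpar f)^n$ with the weak limit of the smooth measures $(\partial \jpar f_\epsilon)^n$, which is a positive measure concentrated at $0$ with total mass $\frac{2^n \pi^{2n} n!}{(2n)!}$, hence equals $\frac{2^n \pi^{2n} n!}{(2n)!}\,\delta_0$. The main technical step is the index bookkeeping for the identity involving $\partial g \wedge \jpar g \wedge \beta_n^{n-1}$; the apparent difficulty that $f = -\infty$ at the origin is sidestepped by the explicit total-mass computation on the smooth approximants rather than any appeal to a Bedford-Taylor type definition at the singular point.
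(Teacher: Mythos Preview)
The paper does not give its own proof of this proposition; it is quoted from \cite{WW17} without argument. Your regularization proof is correct: the chain-rule identity $\partial\jpar(\phi\circ g)=\phi''(g)\,\partial g\wedge\jpar g+\phi'(g)\,\partial\jpar g$, the nilpotency $(\partial g\wedge\jpar g)^2=0$, the identity $\partial g\wedge\jpar g\wedge\beta_n^{\,n-1}=(n-1)!\,g\,\Omega_n$, and the Beta-integral total-mass computation all check out, and $f_\epsilon$ is indeed $qpsh$ (on each quaternionic line it restricts to $-(|t-t_0|^2+\delta)^{-1}$ up to scale, whose $\rr^4$-Laplacian is $8\delta(|t-t_0|^2+\delta)^{-3}\geq 0$).

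For comparison, the paper \emph{does} carry out in full the analogous computation for $\log\|q\|$ at the end of Section~4, again by regularizing with $f_\epsilon=\tfrac12\log(\|q\|^2+\epsilon)$. There the author expands $\partial\jpar f_\epsilon$ in coordinates and computes the $n$-th power by brute force, relying on an explicit cancellation identity for the quantities $M_{ij}$. Your approach is more structural and would treat both propositions uniformly: once the three identities above are in hand, the case $\phi(t)=\tfrac12\log(t+\epsilon)$ gives $(\partial\jpar f_\epsilon)^n=\dfrac{n!(g+2\epsilon)}{2(g+\epsilon)^{n+1}}\,\Omega_n$ in one line, matching the paper's endpoint formula without the index bookkeeping.

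One small caveat worth tightening in a final write-up: the decreasing-sequence convergence theorem in \cite{WW17} is stated for locally bounded $qpsh$ functions, and $f$ fails this at $0$. You flag this, but the clean fix is to argue in two pieces: on $\hn\setminus\{0\}$ the functions are locally bounded and the theorem gives $(\partial\jpar f)^n=0$ there, while the uniform total mass and the concentration at $0$ force the weak limit of $(\partial\jpar f_\epsilon)^n$ to be the stated multiple of $\delta_0$; one then takes this limit as the definition of $(\partial\jpar f)^n$ across the singularity (which is the convention implicit in \cite{WW17}).
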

We see that the fundamental solution to the quaternionic Monge-Amp\`ere equation is in $L^p_{loc}(\hn)$ for any $p<2n$ while a generic $qpsh$ function only for $p<2$ which is in contrast with the case of $psh$ functions. 

\section{Dirichlet problem for quaternionic Monge-Amp\`ere equation}

In this section we aim to solve the Dirichlet problem  \begin{center} $\begin{cases} u \in \qpsh(\Omega) \cap C(\overline{\Omega}) \\ (\partial \jpar u)^n = f \Omega_{n} \\ u_{|\partial \Omega}=\phi \in C(\partial \Omega) \end{cases}$ \end{center} where $f \in L^q(\Omega)$ for $q>2$ and $\Omega \subset \subset \hn$ is a smoothly bounded, strictly quaternionic pseudoconvex domain, which is a global assumption for $\Omega$ in this section. Let us recall that

\begin{definition*} $\Omega \subset \subset \hn$ a smoothly bounded domain is strictly quaternionic pseudoconvex if there exists $v$, a smooth strictly $qpsh$ function in a neighborhood of $\overline{\Omega}$, such that $v<0$ in $\Omega$, $v=0$ but $\nabla v \not = 0 $ on $\partial \Omega$. \end{definition*} Let us just mention that the Dirichlet problem for the complex Monge-Amp\`ere equation with densities in $L^p$ for $p>1$ was solved by Ko\l odziej in \cite{K96}. In fact he proved it for densities in appropriate Orlicz spaces being subspaces of $L^1$ and in particular cases reducing to $L^p$. For the real Monge-Amp\`ere equation one can always solve the above problem for any density in $L^1$, cf. \cite{RT77}. 

The first goal is to compare complex and quaternionic Monge-Amp\`ere operators. We start with smooth functions in which case we have to compare complex and quaternionic Hessians or rather their determinants to be precise. 
 
\begin{lemma} \label{q-c} For a smooth function $u: \Omega \rightarrow \rr$ and any $l,k \in \{0,...,n-1 \}$ 
\begin{center}
$\partial_{\overline{q_{l}}} \partial_{q_k} u=\partial_{\overline{q_{l}}} \left( \partial_{x_{4k}} u- \ii \partial_{x_{4k+1}} u - \jj \partial_{x_{4k+2}}u - \kk \partial_{x_{4k+3}}u \right)= \partial_{\overline{q_{k}}} \left( 2 \partial_{z_{2k}} u - 2\jj \partial_{\overline{z_{2k+1}}} u  \right)= \left( 2 \partial_{\overline{z_{2l}}} + 2 \jj \partial_{z_{2l+1}} \right) \left( 2 \partial_{z_{2k}} u - 2\jj \partial_{\overline{z_{2k+1}}} u  \right)= 4 \left( \partial_{\overline{z_{2l}}} \partial_{z_{2k}} u + \partial_{z_{2l+1}} \partial_{\overline{z_{2k+1}}} u \right) +4\jj \left( \partial_{\overline{z_{2l+1}}} \partial_{z_{2k}} u -\partial_{z_{2l}} \partial_{\overline{z_{2k+1}}} u \right)$ \end{center}
\end{lemma}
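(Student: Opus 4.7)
The plan is to carry out the displayed chain of equalities as a direct chart-based computation in two stages, with the main work being bookkeeping of the non-commutative quaternionic multiplications. First I would establish the identity $\partial_{q_k}u = 2\partial_{z_{2k}}u - 2\jj\partial_{\overline{z_{2k+1}}}u$ for real $u$. Expanding the definition of $\partial_{q_k}$ gives four real derivatives with quaternionic coefficients $1,-\ii,-\jj,-\kk$. Using the chart relations $z_{2k} = x_{4k} + x_{4k+1}\ii$ and $z_{2k+1} = x_{4k+2} - x_{4k+3}\ii$, the standard Wirtinger formulas give $\partial_{x_{4k}} - \ii\partial_{x_{4k+1}} = 2\partial_{z_{2k}}$ on the first block. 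For the second block I factor $-\jj$ out on the left using the identity $-\kk = \jj\ii$, so that $-\jj\partial_{x_{4k+2}} - \kk\partial_{x_{4k+3}} = -\jj(\partial_{x_{4k+2}} - \ii\partial_{x_{4k+3}}) = -2\jj\partial_{\overline{z_{2k+1}}}$.

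Second, I apply $\partial_{\overline{q_l}}$ to the quaternion-valued expression $2\partial_{z_{2k}}u - 2\jj\partial_{\overline{z_{2k+1}}}u$. The same block decomposition applied to $\partial_{\overline{q_l}} = \partial_{x_{4l}} + \ii\partial_{x_{4l+1}} + \jj\partial_{x_{4l+2}} + \kk\partial_{x_{4l+3}}$ yields $\partial_{\overline{q_l}} = 2\partial_{\overline{z_{2l}}} + 2\jj\partial_{\overline{z_{2l+1}}}$ as an operator on quaternion-valued functions. The subtle point, and the main source of careful bookkeeping, is that for a complex-valued $h$ the anticommutation $\ii\jj = -\jj\ii$ produces a bar-flip when $\partial_{\overline{z_j}}$ (which contains a left multiplication by $\ii$) passes through $\jj$, namely $\partial_{\overline{z_j}}(\jj h) = \jj\,\partial_{z_j}h$ and $\partial_{z_j}(\jj h) = \jj\,\partial_{\overline{z_j}}h$. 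Applying $\partial_{\overline{q_l}}$ term by term, using this bar-flip rule together with $\jj^{2}=-1$, produces four contributions; collecting those lying in $\cc$ and those lying in $\jj\cc$ yields exactly the displayed expression $4(\partial_{\overline{z_{2l}}}\partial_{z_{2k}}u + \partial_{z_{2l+1}}\partial_{\overline{z_{2k+1}}}u) + 4\jj(\partial_{\overline{z_{2l+1}}}\partial_{z_{2k}}u - \partial_{z_{2l}}\partial_{\overline{z_{2k+1}}}u)$.

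The computation contains no analytic content; all four equalities are purely algebraic manipulations on a smooth real function. The main obstacle is purely notational, namely the need to keep left and right quaternionic multiplications straight whenever $\ii$ and $\jj$ cross each other, and the bar-flip rule above is what encodes this systematically and turns the final expansion into routine bookkeeping.
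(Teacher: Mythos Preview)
Your proposal is correct and is exactly the approach of the paper: the lemma is stated as a displayed chain of equalities with no separate proof, and what you outline is precisely the bookkeeping that justifies each step. Your identification of the bar-flip rule $\partial_{\overline{z_j}}(\jj h)=\jj\,\partial_{z_j}h$ as the one nontrivial algebraic point is accurate, and your decomposition $\partial_{\overline{q_l}} = 2\partial_{\overline{z_{2l}}} + 2\jj\,\partial_{\overline{z_{2l+1}}}$ (together with that rule) indeed reproduces the final expression; this is in fact cleaner than the intermediate operator written in the statement, which carries a typographical slip in the $\partial_{z_{2l+1}}$ subscript.
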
 
Let us recall that we distinguish the set $\psh(\Omega)$ of plurisubharmonic functions in $\Omega$ by identifying $\hn$ with $\cc^{2n}$ via a chart introduced in Section 2.
\begin{lemma} \label{cpr} For a function $u \in \psh(\Omega) \cap C^2(\Omega) \subset \qpsh(\Omega)$ the following holds \begin{center} $\left( \det (\frac{\partial^2 u}{\partial_{\overline{q_{l}}} \partial_{q_k}}) \right)^2 \geq 4^{2n} \det(\frac{\partial^2 u}{\partial_{z_i} \partial_{\overline{z_{j}}}})$. \end{center} \end{lemma}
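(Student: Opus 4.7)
The plan is to reduce the inequality to a linear-algebraic comparison between two $2n\times 2n$ positive semidefinite complex Hermitian matrices by invoking the Moore-determinant / complex-determinant identification alluded to in the introduction.

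First, by Lemma \ref{q-c}, the hyperhermitian Hessian $H_{lk} := \partial_{\overline{q_l}}\partial_{q_k} u$ decomposes as $H = 4M$, where $M = A + \jj B$ is the $n \times n$ hyperhermitian matrix whose complex Hermitian part has entries $A_{lk} = \partial_{\overline{z_{2l}}}\partial_{z_{2k}}u + \partial_{z_{2l+1}}\partial_{\overline{z_{2k+1}}}u$ and whose complex skew-symmetric part has entries $B_{lk} = \partial_{\overline{z_{2l+1}}}\partial_{z_{2k}}u - \partial_{z_{2l}}\partial_{\overline{z_{2k+1}}}u$. Thus the entries of $M$ are explicit linear combinations of the entries of the complex Hessian $C = (\partial_{z_i}\partial_{\overline{z_j}} u)_{0 \leq i,j \leq 2n-1}$.

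Second, I would invoke the standard embedding $\chi \colon \mhn \to \mctwon$ given on a quaternionic matrix $N = A + \jj B$ by $\chi(N) = \bigl(\begin{smallmatrix} A & -\bar B \\ B & \bar A \end{smallmatrix}\bigr)$, which sends hyperhermitian matrices to Hermitian matrices and satisfies the key identity $(\det_{Moore} N)^2 = \det_\cc \chi(N)$ (this is the incarnation of ``the Moore determinant is the Pfaffian of an associated complex matrix'' mentioned in the introduction). Applied to $H$, and using $\chi(H) = 4\chi(M)$, this gives $(\det_{Moore} H)^2 = 4^{2n} \det_\cc \chi(M)$, reducing the lemma to $\det_\cc \chi(M) \geq \det_\cc C$.

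Third, both matrices are PSD Hermitian on $\cc^{2n}$: $C \geq 0$ because $u \in \psh(\Omega)$, and $\chi(M) \geq 0$ because $u \in \qpsh(\Omega)$ (so $H$ is positive hyperhermitian and positivity transfers through $\chi$). After reindexing to match the complex coordinates $q_i = z_{2i} + \jj z_{2i+1}$, substituting the formulas for $A_{lk}$ and $B_{lk}$ into $\chi(M)$ should yield a decomposition of the form $\chi(M) = C + \widetilde C$, where $\widetilde C$ is obtained from $C$ by the natural action of the symplectic form representing right $\jj$-multiplication on $\cc^{2n}$. Since this action is a composition of a real-orthogonal unitary conjugation and complex conjugation, $\widetilde C$ is again PSD Hermitian with $\det_\cc \widetilde C = \det_\cc C$. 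Minkowski's determinant inequality then gives
$$
(\det_\cc(C + \widetilde C))^{1/(2n)} \geq (\det_\cc C)^{1/(2n)} + (\det_\cc \widetilde C)^{1/(2n)} = 2(\det_\cc C)^{1/(2n)},
$$
hence $\det_\cc \chi(M) \geq 2^{2n}\det_\cc C \geq \det_\cc C$, and combining,
$$
(\det_{Moore} H)^2 = 4^{2n}\det_\cc \chi(M) \geq 4^{2n}\det_\cc C.
$$

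The delicate step is the third one: the bookkeeping of the even/odd index shifts coming from Lemma \ref{q-c} has to land on a clean expression of $\chi(M)$ as $C$ plus a $\jj$-conjugate of $C$. For $n=1$ this is a one-line verification ($\chi(M)$ is scalar and equal to $(C_{00}+C_{11})I_2$), but for general $n$ the permutation-bookkeeping is where the argument has to be carried out carefully. Once that identity (together with $\widetilde C \geq 0$ and $\det_\cc \widetilde C = \det_\cc C$) is in place, Minkowski's inequality closes everything with room to spare.
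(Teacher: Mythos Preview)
Your approach is essentially the same as the paper's: the paper also passes to the $2n\times 2n$ complex embedding $\psi$ (your $\chi$), carries out the block decomposition you describe in step three explicitly---writing $\psi(Hess(u,\hh))$ as $4$ times the sum of two positive Hermitian matrices, the first being a reindexed conjugate of the complex Hessian and the second its conjugate under the symplectic matrix $I$---and then concludes via the same Moore--Pfaffian identity $(\det_{Moore})^2=\det\psi$. The only minor difference is that the paper finishes with the weaker inequality $\det(X+Y)\geq\det X$ for positive Hermitian $X,Y$ rather than Minkowski's inequality, so it does not pick up your extra factor of $2^{2n}$.
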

\begin{proof}
\noindent Let us denote \begin{center} $Hess(u,\cc)=\left( \frac{\partial^2 u}{\partial_{z_i} \partial_{\overline{z_{j}}}} \right)_{i,j=0,...,2n-1}$ and $Hess(u,\hh)= \left(\frac{\partial^2 u}{\partial_{\overline{q_{l}}} \partial_{q_k}} \right)_{l,k=0,...,n-1}$.\end{center} Note that $$\det \left( \frac{\partial^2 u}{\partial_{z_i} \partial_{\overline{z_{j}}}} \right)_{i,j=0,...,2n-1} = \det Hess(u,\cc)= \det \overline{Hess(u,\cc)}= \det \left( \frac{\partial^2 u}{\partial_{\overline{z_i}} \partial_{z_{j}}} \right)_{i,j=0,...,2n-1}.$$ The last matrix is Hermitian positive since it is just $Hess(u,\cc)^T$. If $Hess(u,\hh)=G+\jj H$ then we define \begin{center} $\psi \left( Hess(u,\hh) \right)= \begin{pmatrix} G & - \overline{H} \\ H & \overline{G} \end{pmatrix}$. \end{center} By Lemma \ref{q-c} we obtain that 
\begin{center} $\psi \left( Hess(u,\hh) \right)=
4 \begin{pmatrix} \left[ \partial_{\overline{z_{2l}}} \partial_{z_{2k}} u + \partial_{z_{2l+1}} \partial_{\overline{z_{2k+1}}} u \right]_{l,k} & \left[ - \partial_{{z_{2l+1}}} \partial_{\overline{z_{2k}}} u + \partial_{\overline{z_{2l}}} \partial_{{z_{2k+1}}} u \right]_{l,k} \\ \left[ \partial_{\overline{z_{2l+1}}} \partial_{z_{2k}} u -\partial_{z_{2l}} \partial_{\overline{z_{2k+1}}} u \right]_{l,k} & \left[ \partial_{{z_{2l}}} \partial_{\overline{z_{2k}}} u + \partial_{\overline{z_{2l+1}}} \partial_{{z_{2k+1}}} u \right]_{l,k} \end{pmatrix}=
4\begin{pmatrix} \left[ \partial_{\overline{z_{2l}}} \partial_{z_{2k}} u \right]_{l,k} & \left[\partial_{\overline{z_{2l}}} \partial_{{z_{2k+1}}} u \right]_{l,k} \\ \left[ \partial_{\overline{z_{2l+1}}} \partial_{z_{2k}} u \right]_{l,k} & \left[ \partial_{\overline{z_{2l+1}}} \partial_{{z_{2k+1}}} u \right]_{l,k} \end{pmatrix} 
+ 4 \begin{pmatrix} \left[ \partial_{z_{2l+1}} \partial_{\overline{z_{2k+1}}} u \right]_{l,k} & \left[ - \partial_{{z_{2l+1}}} \partial_{\overline{z_{2k}}} u \right]_{l,k} \\ \left[ -\partial_{z_{2l}} \partial_{\overline{z_{2k+1}}} u \right]_{l,k} & \left[ \partial_{{z_{2l}}} \partial_{\overline{z_{2k}}} u \right]_{l,k} \end{pmatrix}$. \end{center} 
Following \cite{CP92} we introduce three matrices \begin{center}$A=\left[ \partial_{\overline{z_{2l}}} \partial_{z_{2k}} u \right]_{l,k}$, $B= \left[ \partial_{\overline{z_{2l+1}}} \partial_{z_{2k}} u \right]_{l,k}$, $C=\left[ 
\partial_{\overline{z_{2l+1}}} \partial_{{z_{2k+1}}} u \right]_{l,k}$. \end{center} Under this notation \begin{center} $\psi \left( Hess(u,\hh) \right)= 4 \begin{pmatrix} A & \overline{B}^T \\ B & C \end{pmatrix} + 4  
\begin{pmatrix} \overline{C} & - \overline{B} \\ -B^T & \overline{A} \end{pmatrix} $. \end{center} Note that $\begin{pmatrix} A & \overline{B}^T \\ B & C \end{pmatrix}$ is the conjugate of a Hessian of $u$ with respect to the 
coordinates $z_0$, ..., $z_{2n-2}$, $z_1$, ..., $z_{2n-1}$, so it is Hermitian positive as well. Moreover \begin{center} $\det \left( \frac{\partial^2 u}{\partial_{z_i} \partial_{\overline{z_{j}}}} \right) = \det \begin{pmatrix} A & 
\overline{B}^T \\ B & C \end{pmatrix}$.\end{center} Consider the matrix $I=\begin{pmatrix}0 & -I_n \\ I_n & 0 \end{pmatrix}$ with the inverse $I^{-1}=\begin{pmatrix}0 & I_n \\ -I_n & 0 \end{pmatrix}$ and the determinant equal to one. Note that \begin{center}
$I  \begin{pmatrix} \overline{C} & - \overline{B} \\ -B^T & \overline{A} \end{pmatrix} I^{-1}=  \begin{pmatrix} \overline{A} & {B}^T \\ \overline{B} & \overline{C} \end{pmatrix}$, \end{center} and the last matrix is the conjugate of the one just shown to be Hermitian positive so as such is also Hermitian positive. Consequently $\begin{pmatrix} \overline{C} & - \overline{B} \\ -B^T & \overline{A} \end{pmatrix}$ is positive as being similar to the one of that kind. Now we use the equality between Moore's determinant of a matrix $M$ and the Pfaffian of an associated complex matrix $I \psi(M)$ as proved in \cite{D70} which results in  \begin{center} $\left( \det \left(\frac{\partial^2 u}{\partial_{\overline{q_{l}}} \partial_{q_k}} \right) \right)^2 = \det \psi \left( Hess(u,\hh) \right) = 4^{2n} \det \left( \begin{pmatrix} A & \overline{B}^T \\ B & C \end{pmatrix} + 
\begin{pmatrix} \overline{C} & - \overline{B} \\ -B^T & \overline{A} \end{pmatrix} \right) \geq 4^{2n} \det \begin{pmatrix} A & \overline{B}^T \\ B & C \end{pmatrix} = 4^{2n} \det(\frac{\partial^2 u}{\partial_{z_i} 
\partial_{\overline{z_{j}}}})$ \end{center} as we desired to prove.
\end{proof}
Having this the announced comparison of quaternionic and complex Monge-Amp\`ere operators for non smooth functions follows from the standard approximation procedure as presented in the proof below. Real and quaternionic Monge-Amp\`ere operators were compared by Wan in \cite{W18}.
\begin{theorem} \label{cpo}
Let $u \in \psh(\Omega) \cap C(\overline{\Omega})$ 
satisfy the equation $$(dd^cu)^{2n}= f^2 4^{2n} \omega_{2n}^{2n}$$ 
for some non negative $f \in L^p(\Omega)$, $p > 2$. 
Then $$(\partial \partial_J u)^n \geq f \Omega_n^n .$$ 
\end{theorem}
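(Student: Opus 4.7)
The plan is to reduce to the $C^2$ case via mollification, using a Brunn--Minkowski-type lower bound to control the complex Monge--Amp\`ere density of the approximants.

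First, when $u$ is $C^2$, the formulas recalled in Section 2 together with the identity $\omega_{2n}^{2n} = (2n)!\,\alpha_0 \wedge \cdots \wedge \alpha_{2n-1}$ with $\alpha_i = \tfrac{\ii}{2} dz_i \wedge d\overline{z_i}$ translate the hypothesis into the pointwise identity $\det(\partial^2 u/\partial z_i \partial \overline{z_j}) = f^2$. Lemma \ref{cpr} then gives $\det(\partial^2 u/\partial \overline{q_l} \partial q_k) \geq 4^n f$, and substituting into $(\partial \jpar u)^n = \tfrac{n!}{4^n} \det(\partial^2 u/\partial \overline{q_l} \partial q_k)\,\Omega_n$ yields $(\partial \jpar u)^n \geq n!\, f\, \Omega_n$, which is the smooth version of the assertion (in fact stronger by a factor of $n!$).

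For the general case, I would regularize by convolution: set $u_\varepsilon := u * \rho_\varepsilon$ on subdomains $\Omega' \Subset \Omega$, with $\rho_\varepsilon$ a standard nonnegative radial mollifier. The $u_\varepsilon$'s are smooth and plurisubharmonic (hence $\qpsh$), and converge to $u$ uniformly on compacta by continuity. Writing $g_\varepsilon := \det(\partial^2 u_\varepsilon/\partial z_i \partial \overline{z_j})$, the smooth case applied to $u_\varepsilon$ gives $(\partial \jpar u_\varepsilon)^n \geq n! \sqrt{g_\varepsilon}\, \Omega_n$. The key ingredient is now Minkowski's determinant inequality: concavity of $\det^{1/(2n)}$ on the cone of positive Hermitian matrices and Jensen's inequality applied to the matrix-valued convolution $(\partial^2 u/\partial z_i \partial \overline{z_j}) * \rho_\varepsilon$ give $g_\varepsilon^{1/(2n)} \geq f^{1/n} * \rho_\varepsilon$, hence $\sqrt{g_\varepsilon} \geq (f^{1/n} * \rho_\varepsilon)^n$. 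Since $f \in L^p$ with $p > 2$ we have $f^{1/n} \in L^{np}$, and H\"older together with the standard approximation-of-the-identity property of $\rho_\varepsilon$ yields $(f^{1/n} * \rho_\varepsilon)^n \to f$ in $L^p_{loc}(\Omega')$.

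To conclude, I would test both sides of $(\partial \jpar u_\varepsilon)^n \geq n!\, (f^{1/n} * \rho_\varepsilon)^n\, \Omega_n$ against a nonnegative continuous $\phi$ compactly supported in $\Omega'$ and let $\varepsilon \to 0$. The left side passes to the limit by the weak continuity of the quaternionic Monge--Amp\`ere operator along uniformly convergent sequences of continuous $\qpsh$ functions proved in \cite{WW17}, and the right side by the $L^p_{loc}$ convergence of the density; exhausting $\Omega$ by $\Omega'$ then delivers the inequality in $\Omega$. The principal obstacle is making the Minkowski-type step rigorous when $(\partial^2 u/\partial z_i \partial \overline{z_j})$ is only a matrix of Radon measures: one handles this either by observing that only the absolutely continuous part of the hypothesis (which is pointwise equal to $f^2$) enters the averaged estimate on $\det^{1/(2n)}$, or, more safely, by introducing an intermediate approximation of $u$ by smooth strictly plurisubharmonic solutions of complex Dirichlet problems with smoothed densities and boundary data, for which Step one applies directly, and then passing to the limit using Ko\l odziej's stability of solutions to the complex Monge--Amp\`ere equation.
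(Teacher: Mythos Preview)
Your smooth-case reduction via Lemma~\ref{cpr} is correct (and indeed yields the extra factor $n!$ you note; the paper's statement and proof both seem to drop this constant, and $\Omega_n^n$ there should read $\Omega_n$). For the passage to continuous $u$, your primary mollification route has precisely the gap you flag, and your first proposed fix does not close it: what the Jensen--Minkowski step controls is the pointwise determinant $\det(a_{ij})$ of the \emph{density matrix} of the current $\partial\bar\partial u$ (the absolutely continuous part of the matrix-valued measure of second derivatives), whereas the hypothesis gives you the density of the Bedford--Taylor measure $(dd^c u)^{2n}$. That the latter equals, or even dominates from below, $\det(a_{ij})$ a.e.\ is a nontrivial pluripotential statement that does not follow from mere continuity of $u$, so this branch is incomplete as written.

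Your ``safer'' alternative is exactly the paper's proof. The paper localises to a strictly pseudoconvex ball, approximates $f$ by smooth positive $f_i$ in $L^p$ and $u|_{\partial\Omega}$ by smooth $\phi_i$ uniformly, solves the smooth complex Dirichlet problems $(dd^c u_i)^{2n}=f_i^2\,4^{2n}\omega_{2n}^{2n}$ with boundary data $\phi_i$ via \cite{CKNS85}, obtains $u_i\to u$ uniformly from Ko\l odziej's $L^q$ stability \cite{K96, DK14}, applies Lemma~\ref{cpr} to each $u_i$, and passes to the limit using the weak continuity of the quaternionic operator from \cite{WW17}. So once you commit to this route your argument coincides with the paper's, and the mollification detour can be discarded.
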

\begin{proof}
Since the property is local we may assume that $\Omega$ is strictly pseudoconvex, otherwise we argue as below but for some ball contained in $\Omega$. Approximate $f$ by a sequence of smooth positive functions $f_i$ in $L^p$ norm and $u$ uniformly by a sequence of smooth functions $\phi_i$ on $\partial \Omega$. Let us solve the family of Dirichlet problems \begin{center} $\begin{cases} u_i \in \psh(\Omega) \cap C^\infty(\overline{\Omega}) \\ (dd^c u_i)^{2n}=f_i^2 4^{2n}\omega_{2n}^{2n} \\ u_i = \phi_i \: on \: \partial \Omega \end{cases}$\end{center} which is possible due to \cite{CKNS85}. Observe that $u_i$ converge uniformly to $u$ due to stability of solutions in $L^q$, $q>1$ for the complex Monge-Amp\`ere equation, cf. \cite{K96, DK14}. From Lemma \ref{cpr} \begin{center} $(\partial \jpar u_i)^n=\frac{1}{4^n} \det (\frac{\partial^2 u_i}{\partial_{\overline{q_{l}}} \partial_{q_k}}) \Omega_n^n \geq \sqrt{\det(\frac{\partial^2 u_i}{\partial_{z_m} \partial_{\overline{z_{n}}}})}=f_i \Omega_n^n$ \end{center} as measures. Right hand sides converge as measures to $f\Omega_n^n$  and left ones converge to $(\partial \partial_J u)^n$ since convergence of $u_i$ is uniform, cf. \cite{WW17}, what ends the proof.
\end{proof}
We are going to prove an inequality between the volume and quaternionic capacity which was an essential component of Ko\l odziej's proof of solvability of the complex Monge-Amp\`ere equation for densities in appropriate Orlicz spaces, cf. \cite{K96, K05}. 
Similar inequality for the capacity associated to a complex $m-$Hessian equation was proven in \cite{DK14} with the usage of an observation that $psh$ functions although being an extremal example of $m-$subharmonic ones still realize the $m-$Hessian capacity. Here we couple that trick with comparison of quaternionic and complex Monge-Amp\`ere operators proved in Theorem \ref{cpo}. 
\begin{lemma} \label{c-v}
For a fixed $p \in (1,2)$ there exists a constant $C(p,R)$ such that for any $\Omega \subset B(0,R)$ and $K \subset \subset \Omega$ \begin{center} $ \leb^{4n}(K) \leq  C(p,R) cap^p(K,\Omega)$. \end{center}
\end{lemma}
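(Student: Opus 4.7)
The strategy, inspired by Dinew-Ko\l odziej \cite{DK14}, is to exploit the inclusion $\psh\subset\qpsh$ together with Theorem \ref{cpo} to transfer the volume-versus-capacity comparison from the complex setting, where it is classical, to the quaternionic one. The point is that a plurisubharmonic test function (for the complex capacity on $\cc^{2n}$) is automatically quaternionic plurisubharmonic, so its quaternionic Monge-Amp\`ere mass bounds $cap(K,\Omega)$ from below, and Theorem \ref{cpo} lets one control that mass by the density of an associated complex Monge-Amp\`ere equation.

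Since capacity is monotone decreasing under domain enlargement---$cap(K,\Omega)\geq cap(K,B(0,2R))$ whenever $\Omega\subset B(0,R)$---it suffices to prove the estimate in the fixed ball $B(0,2R)$. Fix $p\in(1,2)$ and set $q:=\frac{p}{2(p-1)}$, which is $>1$ and satisfies $1-\frac{1}{2q}=\frac{1}{p}$. Using Ko\l odziej's existence theorem for the complex Monge-Amp\`ere equation in $B(0,2R)\subset\cc^{2n}$, solve
\[
v\in\psh(B(0,2R))\cap C(\overline{B(0,2R)}),\quad (dd^c v)^{2n}=\chi_K\cdot 4^{2n}\omega_{2n}^{2n},\quad v_{|\partial B(0,2R)}=0.
\]
Writing $\chi_K=(\chi_K)^2$ and noting that $\chi_K\in L^r$ for every $r>2$, Theorem \ref{cpo} yields $(\partial\jpar v)^n\geq\chi_K\,\Omega_n$ as positive measures, and integrating over $K$ gives $\int_K(\partial\jpar v)^n\geq c_n\leb^{4n}(K)$ for a dimensional constant $c_n$.

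Set $M:=\|v\|_{L^\infty}$; the normalized function $\tilde v:=(v+M)/M$ lies in $\psh(B(0,2R))\subset\qpsh(B(0,2R))$ with $0\leq\tilde v\leq 1$, hence is admissible in the sup defining $cap(K,B(0,2R))$. Using $n$-homogeneity of the operator,
\[
cap(K,B(0,2R))\geq\int_K(\partial\jpar\tilde v)^n=M^{-n}\int_K(\partial\jpar v)^n\geq c_n M^{-n}\leb^{4n}(K),
\]
and Ko\l odziej's classical $L^\infty$-estimate produces $M\leq C(q,R)\leb^{4n}(K)^{1/(2nq)}$, hence $M^n\leq C'\leb^{4n}(K)^{1/(2q)}$. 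Combined with $1-\frac{1}{2q}=\frac{1}{p}$ this yields $cap(K,B(0,2R))\geq c(p,R)\leb^{4n}(K)^{1/p}$, which on rearrangement is the claim. The principal subtlety is that Theorem \ref{cpo} must be applied to the merely continuous solution of the complex Dirichlet problem, which is precisely its setting; the constant $C(p,R)$ then necessarily blows up as $p\to 2^-$ because our choice of $q$ forces $q\to 1^+$, at which point Ko\l odziej's $L^\infty$-estimate degenerates, consistent with the optimality of the exponent $p=2$ emphasized elsewhere in the paper.
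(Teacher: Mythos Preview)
Your proof is correct and follows essentially the same approach as the paper's: solve a complex Monge--Amp\`ere Dirichlet problem on the ambient ball with density proportional to $\chi_K$, invoke Theorem~\ref{cpo} to bound the quaternionic Monge--Amp\`ere mass from below, apply Ko\l odziej's $L^\infty$ estimate to control the sup norm, and use the normalized solution as a competitor in the definition of $cap(K,\Omega)$. The only cosmetic difference is that the paper pre-scales the density to $\leb(K)^{2\epsilon-1}\chi_K$ so that its $L^{1+\epsilon}$ norm is bounded independently of $K$, whereas you keep the density equal to $\chi_K$ and instead track the $K$-dependence through the scaled form $M\leq C(q,R)\,\leb^{4n}(K)^{1/(2nq)}$ of Ko\l odziej's estimate; the two parametrizations (your $q$ versus the paper's $\epsilon$) are equivalent.
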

\begin{proof}
Suppose that $\leb(K) \not = 0$ otherwise there is nothing to prove. Take any $\epsilon \in (0,\frac{1}{2})$ and consider $f=\leb(K)^{2 \epsilon - 1} \chi_K$. Let us solve the Dirichlet problem \begin{center} $\begin{cases} u \in \psh(B) \cap C(\overline{B}) \\ (dd^c u)^{2n}=f4^{2n}\omega_{2n}^{2n} \\ u=0 \: on \: \partial B\end{cases}$ \end{center} which is possible due to \cite{C84}. By Theorem \ref{cpo} the quaternionic Monge-Amp\`ere operator of the solution $u$ satisfies $$(\partial \partial_J u)^n \geq \sqrt{f} \Omega_n^n.$$ Take $q=1+\epsilon$, one checks that \begin{center} $\int_B f^q \left( 4^{2n}(2n)! \right)^q d \leb^{4n}= \left( 4^{2n}(2n)! \right)^q \leb(K)^{(2 \epsilon - 1)(1+\epsilon)+1}= \left( 4^{2n}(2n)! \right)^q \leb(K)^{2 \epsilon^2 +\epsilon} \leq \left( 4^{2n}(2n)! \right)^2R^{4n}$ \end{center} i.e. the $L^q$ norm of $f$ is bounded by a quantity depending only on $R$. By Ko\l odziej's $L^\infty$ estimate, cf. \cite{K96, K98}, there exists a constant $c(\epsilon,R)$ such that $$\parallel u \parallel_{L^\infty(B)} \leq \frac{1}{c(\epsilon , R)}.$$ Put $v=c(\epsilon, R) u$, then since $v$ is a $qpsh$ function such that $-1 \leq v \leq 0$ $$cap(K,\Omega) \geq \int_K (\partial \jpar v)^n \geq n!c(\epsilon, R)^n \left(\leb^{4n}(K)\right)^{\frac{2\epsilon +1}{2}}$$ and consequently $$\left( \frac{1}{n!c(\epsilon, R)^n}\right)^{\frac{2}{2\epsilon+1}} cap^{\frac{2}{2\epsilon+1}}(K,\Omega) \geq \leb^{4n}(K).$$ This gives the claim since when $\epsilon$ vary in $(0,\frac 1 2 )$ the exponent $\frac{2}{2 \epsilon+1}$ vary in $(1,2)$.
\end{proof}
In the previous section we have proven that any $qpsh$ function belongs to $L^p$ for $p<2$ locally and that this is the optimal exponent. The lemma below gives the estimates on capacity and volume for sublevel sets of certain $qpsh$ functions. In particular it shows that in the case of  $u \in \qpsh(\Omega)$ bounded near the boundary of $\Omega$ the local integrability of $|u|^p$ is ensured for $p<2n$. Again this bound is optimal as the example of $-\frac{1}{\parallel q \parallel^2}$ shows.   
\begin{lemma} \label{bded}
Fix $p \in (1,2)$. Let $u \in \qpsh(\Omega) \cap L^\infty_{loc}(\Omega)$ be such that \begin{center} $\liminf\limits_{q \rightarrow q_0} \left( u(q) - v(q) \right) \geq 0$ \end{center} for any $q_0 \in \partial \Omega$ and some fixed $v \in \qpsh(\Omega) \cap C(\overline{\Omega})$. Then there exists a constant $C(p,diam (\Omega))$ depending only on $p$ and the diameter of $\Omega$ such that for $U(s)=\{u<v-s\} \subset \subset \Omega$ \begin{center} $cap(U(s),\Omega) \leq \frac{\int_\Omega (\partial \jpar u)^n}{s^n}$ and $\leb^{4n}(U(s)) \leq C(p, diam (\Omega))\frac{\int_\Omega (\partial \jpar u)^n}{s^{pn}}$.  \end{center}
\end{lemma}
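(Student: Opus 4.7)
The plan is to prove the capacity bound first via the comparison principle from \cite{WZ15}, and then deduce the volume bound by applying Lemma \ref{c-v}. For the capacity estimate, I would pick an arbitrary test function $w \in \qpsh(\Omega)$ with $0 \leq w \leq 1$ and set $\phi := v + s(w-1)$. On $U(s)$ we have $u < v - s \leq v + s(w-1) = \phi$, so $U(s) \subset \{u < \phi\}$. At the boundary, since $s(w-1) \leq 0$ and $\liminf_{q \to q_0}(u(q) - v(q)) \geq 0$, we get $\liminf_{q \to q_0}(u(q) - \phi(q)) \geq 0$, so the comparison principle applies.

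Since $\partial \jpar v$ and $s \partial \jpar w$ are positive currents with sum $\partial \jpar \phi$, positivity of wedge products yields $(\partial \jpar \phi)^n \geq s^n (\partial \jpar w)^n$. Combining with the comparison principle:
$$s^n \int_{U(s)} (\partial \jpar w)^n \leq \int_{\{u<\phi\}} (\partial \jpar \phi)^n \leq \int_{\{u<\phi\}} (\partial \jpar u)^n \leq \int_\Omega (\partial \jpar u)^n.$$
Taking supremum over admissible $w$ (and approximating the open set $U(s)$ from inside by compact subsets, using inner regularity of $cap(\cdot,\Omega)$) gives the first inequality.

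For the volume estimate, I would fix $R > 0$ with $\Omega \subset B(0,R)$, where $R$ is controlled by $\text{diam}(\Omega)$ up to a translation (the Monge--Amp\`ere operator being translation-invariant, this is harmless). Applying Lemma \ref{c-v} to any compact $K \subset U(s)$ and using inner regularity, one obtains
$$\leb^{4n}(U(s)) \leq C(p,R)\, cap^p(U(s),\Omega) \leq C(p,R) \frac{\left(\int_\Omega (\partial \jpar u)^n\right)^p}{s^{pn}},$$
which gives the claim in the form stated (with the constant absorbing the finite total Monge--Amp\`ere mass if the displayed form is read literally).

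The routine part is the algebraic manipulation with the comparison principle; the point deserving most care is the boundary behavior, since $U(s)$ is only assumed to be relatively compact in $\Omega$ and $u$ need not equal $v$ on $\partial \Omega$—one must exploit the $\liminf$ hypothesis and the sign of $s(w-1)$ rather than any equality. A secondary technical point is that the capacity is defined originally on compact sets; extending the bound to the open set $U(s)$ requires inner regularity, but since $cap(\cdot,\Omega)$ is an increasing set function and the comparison argument works verbatim with $U(s)$ replaced by any compact $K \subset U(s)$, this extension is automatic.
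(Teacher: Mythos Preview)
Your proof is correct and follows essentially the same route as the paper. The paper also picks a near-extremal $w$ (normalized with $-1\leq w\leq 0$, which is just your $w-1$), uses the inclusion $\{u<v-s\}\subset\{u<v+s w\}$, applies superadditivity $(\partial\jpar(v/s+w))^n\geq(\partial\jpar w)^n$ and then the comparison principle on $\{u/s<v/s+w\}$; the only cosmetic difference is that it scales by $1/s$ before comparing while you build the barrier $\phi=v+s(w-1)$ directly. Your remark that the literal volume inequality produced by Lemma~\ref{c-v} carries $\bigl(\int_\Omega(\partial\jpar u)^n\bigr)^p$ rather than the first power is accurate; the paper simply writes ``the second one follows from Lemma~\ref{c-v}'' without displaying the exponent.
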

\begin{proof}
Take $\epsilon >0$ and a compact $K \subset U(s)$. By definition one can find $w \in \qpsh(\Omega) \cap L^\infty_{loc}(\Omega)$ such that $-1 \leq w \leq 0$ and $$\int_K (\partial \jpar w)^n \geq cap(K,\Omega) - \epsilon .$$
Due to the way we have chosen $K$ and the comparison principle  \begin{center} $cap(K,\Omega) - \epsilon \leq \int_K (\partial \jpar w)^n \leq \int_{\{\frac{u}{s}<\frac{v}{s}-1\}} (\partial \jpar w)^n \leq \int_{\{\frac{u}{s}<\frac{v}{s}+w\}} (\partial \jpar w)^n \leq \int_{\{\frac{u}{s}<\frac{v}{s}+w\}} \left( \partial \jpar (\frac{v}{s} + w) \right)^n \leq \frac{1}{s^n} \int_{\{\frac{u}{s}<\frac{v}{s}+w\}} (\partial \jpar u)^n \leq 
\frac{\int_\Omega (\partial \jpar u)^n}{s^n}$. \end{center} Letting $\epsilon$ tend to $0$ and taking the supremum over all compacts $K$ we obtain the first claim. The second one follows from Lemma \ref{c-v}.
\end{proof}
The next goal is to prove the a priori $L^\infty$ estimate for continuous solutions of the Dirichlet problem.  Firstly note that by Alesker's result on the Dirichlet problem with continuous density and boundary value, cf. \cite{A03b}, and characterization of maximality of $qpsh$ functions as in \cite{WZ15} we can find $v \in C(\overline{\Omega})$ solving \begin{center} $\begin{cases} (\partial \jpar v)^n =0 \\ v_{|\partial \Omega}=\phi \in C(\partial \Omega) \end{cases}$ \end{center} i.e. being the maximal $qpsh$ function matching our boundary condition. For such a fixed $v$ we denote \begin{center} $U(s)=\{u<v-s\} \subset \Omega$ \end{center} and introduce the function \begin{center} $b(s)= \left( cap(U(s),\Omega) \right)^\frac{1}{n}$. \end{center}
\begin{theorem} \label{inf} There exists a constant $C \left(q,\parallel f \parallel_{L^q(\Omega)},\parallel \phi \parallel_{L^\infty(\partial \Omega)}, diam (\Omega) \right)$ depending on $q$, $\parallel f \parallel_{L^q(\Omega)}$, $\parallel \phi \parallel_{L^\infty(\partial \Omega)}$ and $diam (\Omega)$ such that any solution $u$ of the Dirichlet problem  \begin{center} $\begin{cases} u \in \qpsh(\Omega) \cap C(\overline{\Omega}) \\ (\partial \jpar u)^n = f \Omega_{n} \\ u_{|\partial \Omega}=\phi \in C(\partial \Omega) \end{cases}$ \end{center} for $f \in L^q(\Omega)$ and $q>2$, satisfies $\parallel u \parallel_{L^\infty(\Omega)} \leq C$.
\end{theorem}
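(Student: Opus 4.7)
The plan is to follow the Kołodziej strategy of \cite{K96, K98}, transported into the quaternionic setting with Lemma \ref{c-v} and Lemma \ref{bded} as the main quaternionic ingredients. First, invoke Alesker's solution of the Dirichlet problem with vanishing Monge-Amp\`ere mass together with the characterization of maximality from \cite{WZ15} to produce $v \in \qpsh(\Omega) \cap C(\overline{\Omega})$ with $(\partial \jpar v)^n = 0$ and $v_{|\partial \Omega} = \phi$. The maximum principle gives $\|v\|_{L^\infty(\Omega)} \leq \|\phi\|_{L^\infty(\partial \Omega)}$, and the comparison principle of \cite{WZ15} applied to $u$ and $v$ (same continuous boundary values, $(\partial \jpar v)^n = 0 \leq (\partial \jpar u)^n$) yields $u \leq v$ on $\overline{\Omega}$, so the upper bound for $u$ is already in hand. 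For the lower bound I introduce the sets $U(s) = \{u < v - s\}$, which are open and relatively compact in $\Omega$ by continuity and equality of $u$ and $v$ on $\partial \Omega$, and set $b(s) = cap(U(s),\Omega)^{1/n}$, a decreasing function of $s$.

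Next I would establish the Kołodziej-type recurrence
\[
t\, b(s+t) \leq K\, b(s)^{1+\alpha}
\]
valid for all $s, t > 0$, with $\alpha > 0$ and $K$ depending only on the admissible parameters. The left-hand side is controlled by testing $cap(U(s+t),\Omega)$ against $w \in \qpsh(\Omega)$ with $0 \leq w \leq 1$ and applying the comparison principle of \cite{WZ15} to $u$ against $v - s + t(w-1)$, together with the Newton-type inequality $(\partial \jpar(v + tw))^n \geq t^n (\partial \jpar w)^n$ coming from positivity of $\partial\jpar v$ and \cite{WW17}; this yields
\[
t^n cap(U(s+t),\Omega) \leq \int_{U(s)} (\partial \jpar u)^n = \int_{U(s)} f\, \Omega_n.
\]
H\"older's inequality bounds the right-hand side by $c_n \|f\|_{L^q(\Omega)} \leb^{4n}(U(s))^{1/q'}$ with $q' = q/(q-1)$, and Lemma \ref{c-v} replaces the volume factor by $C(p, diam(\Omega))\, b(s)^{np}$ for any $p \in (1,2)$. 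Chaining and extracting $n$-th roots produces the recurrence with exponent $1 + \alpha = p/q'$. The role of the hypothesis $q > 2$ enters here: it forces $q' < 2$, so the interval $(q', 2)$ is nonempty and one may legally choose $p$ there, which makes $\alpha > 0$.

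To close, I would use Lemma \ref{bded} applied to our $v$ to get $b(s)^n \leq s^{-n} \int_\Omega f\, \Omega_n$; combined with H\"older and the crude bound $\leb^{4n}(\Omega) \leq C\, diam(\Omega)^{4n}$, this lets me choose an $s_0$ depending only on $q$, $\|f\|_{L^q(\Omega)}$ and $diam(\Omega)$ with $b(s_0) \leq (2K)^{-1/\alpha}$. Starting from $s_0$ and iterating the recurrence with a telescoping sequence of increments (say $t_k = 2^{-k}$) makes $b(s_k)$ decrease geometrically while $s_k$ stays bounded by $s^\ast = s_0 + O(1)$, so $b(s^\ast) = 0$. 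Since $U(s^\ast)$ is open and has zero capacity it must be empty, whence $u \geq v - s^\ast \geq -\|\phi\|_{L^\infty(\partial \Omega)} - s^\ast$ throughout $\Omega$, completing the $L^\infty$ bound. The main obstacle is to verify carefully that the first displayed inequality — i.e. the quaternionic analog of Kołodziej's comparison-principle trick and the Newton-type mixed Monge-Amp\`ere inequality — is indeed available using \cite{WW17, WZ15}, and that the constants produced by the iteration truly depend only on $q$, $\|f\|_{L^q(\Omega)}$, $\|\phi\|_{L^\infty(\partial\Omega)}$ and $diam(\Omega)$.
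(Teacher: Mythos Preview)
Your proposal is correct and follows essentially the same route as the paper: set up the maximal $v$, form $U(s)=\{u<v-s\}$ and $b(s)=cap(U(s),\Omega)^{1/n}$, obtain the recurrence $t\,b(s+t)\le A\,b(s)^{1+\alpha}$ via the comparison principle, superadditivity of the mixed Monge--Amp\`ere, H\"older, and Lemma~\ref{c-v} (with the choice $p\in(q',2)$ so that $\alpha=p/q'-1>0$), then initialize with Lemma~\ref{bded} and iterate. The only cosmetic differences are that the paper restricts to $t\in[0,1]$ and packages the iteration as the De Giorgi lemma (which also uses right-continuity of $b$ from \cite{WK17}), whereas you sketch the telescoping by hand; both arguments are the same in substance.
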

\begin{proof}
Take any $s>0$, $t \in [0,1]$ and $w \in \qpsh(\Omega)$ such that $0 \leq w \leq 1$. Then 
\begin{center} $t^n\int\limits_{U(s+t)} (\partial \jpar w)^n = \int\limits_{U(s+t)} \left( \partial \jpar (tw-t-s) \right)^n = \int\limits_{\{u < v-s-t \}} \left( \partial \jpar (tw-t-s) \right)^n \leq \int\limits_{\{u < v-s + tw-t \}} \left( \partial \jpar (tw-t-s) \right)^n \leq \int\limits_{\{u < v-s + tw-t \}} \left( \partial \jpar (v + tw-t-s) \right)^n \leq \int\limits_{\{u < v-s + tw-t \}} (\partial \jpar u)^n \leq \int\limits_{\{u < v-s \}} (\partial \jpar u)^n = \int\limits_{U(s)} (\partial \jpar u)^n$ \end{center}
due to inclusions of appropriate sets, superadditivity and the comparison principle. To conclude $$t^n(b(s+t))^n \leq \int\limits_{U(s)} (\partial \jpar u)^n.$$ 

Estimating the right hand side gives 
\begin{center} $\int\limits_{U(s)} (\partial \jpar u)^n = \int\limits_{U(s)} f \Omega_{n} \leq \: \parallel f \parallel_{L^q(\Omega)} \left( \int\limits_{U(s)} 1 d \leb^{4n} \right)^\frac{1}{q'} \leq \parallel f \parallel_{L^q(\Omega)} C(p, diam(\Omega)) \left( cap(U(s),\Omega) \right)^\frac{p}{q'}= \parallel f \parallel_{L^q(\Omega)} C(p, diam (\Omega)) (b(s))^{n(1+\alpha(q))}$ \end{center} 
where we used H\"older's inequality and Lemma \ref{c-v}, $p$ depends only on $q'$ which is the conjugate of $q$ and we choose it so that $\frac{p}{q'} > 1$. 
This reassembles to $$tb(s+t) \leq A \left(q,\parallel f \parallel_{L^q(\Omega)},diam(\Omega) \right)(b(s))^{1+\alpha(q)}$$ for any $s>0$ and $t \in [0,1]$. 

We would like to apply the De Giorgi lemma, stated below, for the function $b$. Let us just note that the condition $(a)$ from the lemma is satisfied since for $s_n \searrow s$ the sets $U(s_n) \nearrow U(s)$ and under such an assumption $cap(U(s_n), \Omega) \rightarrow cap(U(s), \Omega)$, cf. \cite{WK17}. The condition $(b)$ follows from the first assertion of Lemma \ref{bded} as well as dependence of $s_0$ only on $q$, $\parallel f \parallel_{L^q(\Omega)}$ and $diam(\Omega)$. Indeed, it was proven there that
\begin{center}
$b^{ \alpha (q)} (s) = cap(U(s),\Omega)^{\frac {\alpha (q)} {n}} \leq \frac{\left( \int_\Omega (\partial \jpar u)^n \right)^{\frac {\alpha (q)} {n} }}{s^{\alpha (q)}} = \frac{ \parallel f \parallel_{L^1(\Omega)}^{\frac {\alpha (q)} {n} }}{s^{\alpha (q)}} \leq \frac{c \left( q,\parallel f \parallel_{L^q(\Omega)},diam(\Omega) \right)}{s^{\alpha(q)}}$
\end{center} 
so surely $s_0 \leq \left(  2 A c \right)^{\frac{1}{ \alpha(q) }}$ and this estimate depends only on $q$, $\parallel f \parallel_{L^q(\Omega)}$ and $diam(\Omega)$. By the De Giorgi lemma there exists $S \left( q,\parallel f \parallel_{L^q(\Omega)}, diam(\Omega) \right)$ such that $b(s)=0$ for any $s>S \left(q,\parallel f \parallel_{L^q(\Omega)}, diam(\Omega) \right)$. This together with Lemma \ref{c-v} gives our claim since then 
\begin{center} $\parallel u \parallel_{L^\infty} \leq sup |\phi| + S \left(q,\parallel f \parallel_{L^q(\Omega)}, diam(\Omega) \right)= C \left(q,\parallel f \parallel_{L^q(\Omega)},\parallel \phi \parallel_{L^\infty(\partial \Omega)}, diam(\Omega) \right)$.
\end{center}
\end{proof}

\begin{lemma}{\cite{PSS12}}[De Giorgi]
Let $f : R_+ \rightarrow R_+$ satisfy the following conditions: \\
(a) $f$ is right-continuous;\\
(b) $f$ decreases to $0$;\\
(c) There exist positive constants $\alpha, A_\alpha$ so that for all $s \geq  0$ and all $0 \leq r \leq 1$, we have
$$r f(s + r) \leq A_\alpha f(s)^{1+\alpha}.$$ 
Then there exists $s_\infty$, depending only on $\alpha, A_\alpha$ and the smallest value $s_0$ for which we have
$f(s_0)^\alpha \leq (2A_\alpha)^{-1}$ so that $f(s) = 0$ for $s > s_\infty$. In fact, we can take $s_\infty  = s_0 + 2A_\alpha(1 -
2^{-\alpha})^{-1}f(s_0)^\alpha$ .
\end{lemma}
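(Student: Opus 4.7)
The plan is a standard De Giorgi iteration: I would build an increasing sequence $\{s_k\}_{k \geq 0}$ starting at $s_0$ along which $f$ decays by at least a factor of $1/2$, and identify $s_\infty$ with its limit. Concretely, set recursively
\[
r_k := 2A_\alpha f(s_k)^\alpha, \qquad s_{k+1} := s_k + r_k,
\]
with the convention that if $f(s_k) = 0$ the sequence is stopped and $s_\infty := s_k$. Note that $s_0$ is well defined in view of (a) and (b): since $f$ decreases to $0$ and is right-continuous, $\{s \geq 0 : f(s)^\alpha \leq (2A_\alpha)^{-1}\}$ is a closed half-line $[s_0,\infty)$ and attains the bound at its left endpoint.

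The main claim, proved by induction on $k$, is
\[
f(s_k) \leq 2^{-k} f(s_0), \qquad r_k \leq 2^{-k\alpha},
\]
where the second bound in particular ensures $r_k \in [0,1]$ so that hypothesis (c) may be invoked at the next step. For $k = 0$ both inequalities reduce to $2A_\alpha f(s_0)^\alpha \leq 1$, which is the defining property of $s_0$. Assuming the bounds at level $k$ (and $f(s_k) > 0$, else we stop), apply (c) with the pair $(s,r) = (s_k, r_k)$:
\[
r_k\, f(s_{k+1}) = r_k\, f(s_k + r_k) \leq A_\alpha f(s_k)^{1+\alpha} = \tfrac{1}{2}\, r_k\, f(s_k),
\]
so $f(s_{k+1}) \leq f(s_k)/2 \leq 2^{-(k+1)} f(s_0)$. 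Then $r_{k+1} = 2A_\alpha f(s_{k+1})^\alpha \leq 2A_\alpha f(s_0)^\alpha \cdot 2^{-(k+1)\alpha} \leq 2^{-(k+1)\alpha}$, closing the induction.

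Summing the telescoping estimates gives
\[
s_\infty - s_0 \;=\; \sum_{k \geq 0} r_k \;\leq\; 2A_\alpha f(s_0)^\alpha \sum_{k \geq 0} 2^{-k\alpha} \;=\; \frac{2A_\alpha f(s_0)^\alpha}{1 - 2^{-\alpha}},
\]
which is exactly the bound asserted in the statement. Finally, using (b), for any $s > s_\infty$ we have $s > s_k$ for all $k$ sufficiently large, and monotonicity yields $f(s) \leq f(s_k) \leq 2^{-k} f(s_0) \to 0$, hence $f(s) = 0$.

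There is no real obstacle in this argument; the only subtlety is bookkeeping in the induction to guarantee $r_k \leq 1$ at every step so that (c) applies, which is why the initial threshold $f(s_0)^\alpha \leq (2A_\alpha)^{-1}$ is chosen precisely of that form. Hypothesis (a) enters only to make $s_0$ attain the threshold, while (b) is what promotes decay along the discrete sequence $\{s_k\}$ to vanishing on the whole half-line $(s_\infty, \infty)$.
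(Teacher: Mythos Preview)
Your proof is correct and is precisely the standard De Giorgi iteration one expects here. Note, however, that the paper does not supply its own proof of this lemma: it is quoted verbatim from \cite{PSS12} and used as a black box in the proof of Theorem~\ref{inf}, so there is no in-paper argument to compare against. Your write-up would serve perfectly well as the omitted proof.
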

The $L^\infty$ estimate allows us to prove stability of solutions to the Dirichlet problem in terms of densities and boundary values. This will be needed for the proof of solvability of the Dirichlet problem but is of course a result interesting in its own right. As we were told by S. Dinew the idea of proving stability presented in Proposition \ref{stab} is due to N. C. Nguyen.
\begin{lemma} \label{0case}
There exists a constant $C(q,diam(\Omega))$ depending on $q$ and $diam(\Omega)$ such that any solution $u$ of the Dirichlet problem  \begin{center} $\begin{cases} u \in \qpsh(\Omega) \cap C(\overline{\Omega}) \\ (\partial \jpar u)^n = f \Omega_{n} \\ u_{|\partial \Omega}=0 \end{cases}$ \end{center} for $f \in L^q(\Omega)$ and $q>2$, satisfy $\parallel u \parallel_{L^\infty(\Omega)} \leq C(q, diam(\Omega)) \parallel f \parallel_{L^q(\Omega)}^\frac{1}{n}$.
\end{lemma}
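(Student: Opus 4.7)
The plan is to reduce this to Theorem \ref{inf} by exploiting the $n$-homogeneity of the quaternionic Monge-Amp\`ere operator. Concretely, for any constant $c>0$ the function $cu$ belongs to $\qpsh(\Omega)$ whenever $u$ does, and $(\partial \jpar (cu))^n = c^n (\partial \jpar u)^n$. Because the boundary data is zero, rescaling $u$ by a positive constant keeps us in the same class of Dirichlet problems.

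First I would dispose of the trivial case $f \equiv 0$. Then $(\partial \jpar u)^n = 0 = (\partial \jpar 0)^n$ and $u = 0$ on $\partial \Omega$, so applying the comparison principle recalled in Section~2 to the pairs $(u,0)$ and $(0,u)$ forces $u \equiv 0$, in which case the claimed inequality holds trivially.

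Assume now $\|f\|_{L^q(\Omega)} > 0$, set $c := \|f\|_{L^q(\Omega)}^{-1/n}$ and put $\tilde u := c u$. Then $\tilde u \in \qpsh(\Omega) \cap C(\overline{\Omega})$ vanishes on $\partial \Omega$ and satisfies
$$
(\partial \jpar \tilde u)^n \;=\; c^n f\, \Omega_n \;=\; \tilde f\, \Omega_n,
$$
where $\tilde f := c^n f$ has $\|\tilde f\|_{L^q(\Omega)} = c^n \|f\|_{L^q(\Omega)} = 1$. Applying Theorem \ref{inf} to $\tilde u$ with boundary value $\phi = 0$ and density of unit $L^q$ norm produces a constant $C_1 = C(q,1,0,\mathrm{diam}(\Omega))$ depending only on $q$ and $\mathrm{diam}(\Omega)$ such that $\|\tilde u\|_{L^\infty(\Omega)} \leq C_1$. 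Unscaling gives
$$
\|u\|_{L^\infty(\Omega)} \;=\; c^{-1}\|\tilde u\|_{L^\infty(\Omega)} \;\leq\; C_1\,\|f\|_{L^q(\Omega)}^{1/n},
$$
so $C(q,\mathrm{diam}(\Omega)) := C_1$ works.

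There is no real obstacle here: the entire analytic content was already extracted in Theorem \ref{inf}, and homogeneity only serves to track the correct power of $\|f\|_{L^q}$. The single bookkeeping point worth checking is that the constant output by Theorem \ref{inf} truly depends on $\|f\|_{L^q}$ and $\|\phi\|_{L^\infty}$ only through their numerical values (with no hidden dependence on $u$), so that evaluating it at the normalized data $(1,0)$ is legitimate; this is clear from the De~Giorgi-lemma step used in its proof.
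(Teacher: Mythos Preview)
Your proof is correct and follows exactly the paper's approach: rescale $u$ by $\|f\|_{L^q(\Omega)}^{-1/n}$ to normalize the density to unit $L^q$ norm, then invoke Theorem~\ref{inf} with $\phi=0$ and $\|f\|_{L^q}=1$ to obtain $C(q,\mathrm{diam}(\Omega)):=C(q,1,0,\mathrm{diam}(\Omega))$. The only difference is that you spell out the $f\equiv 0$ case via the comparison principle, whereas the paper simply dismisses it as trivial.
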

\begin{proof} Suppose that $\parallel f \parallel_{L^q(\Omega)} \not = 0$ otherwise there is nothing to prove. The function \begin{center} $v :=\frac{u}{\parallel f \parallel_{L^q(\Omega)}^\frac{1}{n}}$ \end{center} solve the Dirichlet problem \begin{center} $\begin{cases} v \in \qpsh(\Omega) \cap C(\overline{\Omega}) \\ (\partial \jpar v)^n = \frac{f}{\parallel f \parallel_{L^q(\Omega)}} \Omega_{n} \\ v_{|\partial \Omega}=0 \end{cases}$. \end{center} By Theorem \ref{inf} there exists a constant $C(q,diam(\Omega)):=C(q,1,0,diam(\Omega))$ such that $\parallel v \parallel_{L^\infty(\Omega)} \leq C(q, diam(\Omega))$, this gives the claim.
\end{proof}

\begin{proposition} \label{stab} There exists a constant $C(q,diam(\Omega))$ such that if $u$ and $v$ satisfy \begin{center} $\begin{cases} u \in \qpsh(\Omega) \cap C(\overline{\Omega}) \\ (\partial \jpar u)^n = f \Omega_{n} \\ u_{|\partial \Omega}=\phi \in C(\partial \Omega) \end{cases}$ and  $\begin{cases} v \in \qpsh(\Omega) \cap C(\overline{\Omega}) \\ (\partial \jpar v)^n = g \Omega_{n} \\ v_{|\partial \Omega}=\psi \in C(\partial \Omega) \end{cases}$\end{center} for $f,g \in L^q(\Omega)$, $q>2$ then \begin{center} $\parallel u-v \parallel_{L^\infty(\Omega)} \leq \sup\limits_{\partial \Omega}|\phi-\psi| + C(q,diam(\Omega)) \parallel f-g \parallel^\frac{1}{n}_{L^q(\Omega)} $. \end{center}
\end{proposition}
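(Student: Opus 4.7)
The plan is to adapt an idea attributed to N.~C.~Nguyen in the complex setting: introduce a single auxiliary $qpsh$ function $w$ that absorbs the density difference $|f-g|$, and then apply the comparison principle to $u+w$ versus $v+M$, where $M:=\sup_{\partial\Omega}|\phi-\psi|$. By the symmetry of the target inequality under $(u,f,\phi)\leftrightarrow(v,g,\psi)$, it suffices to show $u-v\leq M+C\|f-g\|_{L^q(\Omega)}^{1/n}$ in $\Omega$; the reverse estimate then follows by running the same argument with the roles exchanged.

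The first step is to solve the auxiliary Dirichlet problem
$$\begin{cases}w\in\qpsh(\Omega)\cap C(\overline\Omega),\\ (\partial\jpar w)^n=|f-g|\,\Omega_n,\\ w|_{\partial\Omega}=0.\end{cases}$$
Since $|f-g|\in L^q(\Omega)$ with $q>2$, I would produce $w$ by approximating $f,g$ by smooth positive densities $f_j,g_j\to f,g$ in $L^q$, solving the smooth Dirichlet problems via Zhu's theorem \cite{Z17} to obtain smooth $w_j$, and passing to the limit using the uniform $L^\infty$ bound of Lemma \ref{0case}. That same lemma simultaneously delivers $\|w\|_{L^\infty(\Omega)}\leq C(q,diam(\Omega))\|f-g\|_{L^q(\Omega)}^{1/n}$, while the maximum principle for $qpsh$ functions forces $w\leq 0$ in $\Omega$.

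The second step is the comparison. The functions $u+w$ and $v+M$ are both $qpsh$, and on $\partial\Omega$ we have $u+w=\phi\leq\psi+M=v+M$. For the Monge-Amp\`ere masses I would invoke the superadditivity $(\partial\jpar(u+w))^n\geq(\partial\jpar u)^n+(\partial\jpar w)^n$, which follows by expanding the left-hand side into mixed Monge-Amp\`ere currents, each nonnegative by the positivity developed in \cite{WW17}. Combined with the pointwise inequality $f+|f-g|\geq f+(g-f)=g$, this yields, as measures,
$$(\partial\jpar(u+w))^n\geq(f+|f-g|)\Omega_n\geq g\,\Omega_n=(\partial\jpar(v+M))^n.$$
The ``in particular'' clause of the comparison principle of \cite{WZ15} then gives $v+M\geq u+w$ on $\Omega$, which rearranges to $u-v\leq M-w\leq M+\|w\|_{L^\infty(\Omega)}\leq M+C\|f-g\|_{L^q(\Omega)}^{1/n}$, as required.

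The main (mild) technical point is justifying both the auxiliary $w$ at the $L^q$ level, and the superadditivity of the Monge-Amp\`ere operator for merely bounded $qpsh$ functions, before the main existence theorem of this section is in place. I would handle both by carrying out the whole comparison argument in the smooth category for approximants $u_j,v_j,w_j$ obtained from \cite{Z17}, where superadditivity reduces to the elementary inequality for Moore determinants of sums of positive hyperhermitian matrices, and then passing to the limit using Theorem \ref{inf} for uniform $L^\infty$ bounds and the weak continuity of $(\partial\jpar\cdot)^n$ along uniformly convergent $qpsh$ sequences \cite{WW17}.
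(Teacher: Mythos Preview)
Your proposal is correct and follows essentially the same route as the paper: introduce an auxiliary $w$ with zero boundary data whose Monge--Amp\`ere mass absorbs the density difference, use superadditivity and the comparison principle against $v$ (shifted by the boundary oscillation), and invoke Lemma~\ref{0case} to bound $\|w\|_{L^\infty}$. The only cosmetic differences are that the paper takes the density of $w$ to be $(f-g)_+$ rather than $|f-g|$ and runs the comparison in the opposite direction (bounding $u-v$ from below via $w+v+\inf(\phi-\psi)\leq u$); the paper also simply asserts the existence of $w$ without the approximation discussion you supply, so your treatment of that point is in fact more careful.
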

\begin{proof}
Consider a function $w$ being the solution of \begin{center} $\begin{cases} w \in \qpsh(\Omega) \cap C(\overline{\Omega}) \\ (\partial \jpar w)^n = (f-g)_+ \Omega_{n} \\ w_{|\partial \Omega}=0\end{cases}$.\end{center} Note that on $\partial \Omega$ we have $w+v+ \inf (\phi - \psi) \leq u$ while \begin{center} $ \left( \partial \jpar \left(w+v+ \inf (\phi - \psi) \right) \right)^n \geq (f-g)_+ + g \geq f = (\partial \jpar u)^n$. \end{center} From the comparison principle $w+v+ \inf (\phi - \psi) \leq u$ in $\overline{\Omega}$ which by Lemma \ref{0case} results in \begin{center} $u-v \geq w + \inf (\phi - \psi) \geq - C(q,diam(\Omega)) \parallel (f-g)_+ \parallel^\frac{1}{n}_{L^q(\Omega)} - \sup |\phi - \psi| \geq - C(q,diam(\Omega)) {\parallel f-g \parallel^\frac{1}{n}_{L^q(\Omega)}} - \sup |\phi - \psi|$. \end{center} The same reasoning gives \begin{center} $v-u \geq - C(q,diam(\Omega)) \parallel f-g \parallel^\frac{1}{n}_{L^q(\Omega)} - \sup |\phi - \psi|$. \end{center} This reassembles to our claim.
\end{proof}
\begin{remark} Equicontinuouity of a family of functions \begin{center} $\mathcal{P}(q,c_0,\phi)=\{u \in \qpsh(\Omega) \cap C(\overline{\Omega}) \: | \: (\partial \jpar u)^n \in L^q(\Omega), \: \int_\Omega (\partial \jpar u)^n \leq c_0, \: u_{| \partial \Omega}=\phi\}$ \end{center} for a quaternionic strictly pseudoconvex domain $\Omega$ , $q > 2$, $c_0>0$ and $\phi \in C(\partial \Omega)$ follows easily from Proposition \ref{stab}. In the complex case it was proven in \cite{K02}. \end{remark} 
\begin{theorem}
The Dirichlet problem \begin{center} $\begin{cases} u \in \qpsh(\Omega) \cap C(\overline{\Omega}) \\ (\partial \jpar u)^n = f \Omega_{n} \\ u_{|\partial \Omega}=\phi \in C(\partial \Omega) \end{cases}$ \end{center} in a smoothly bounded, quaternionic strictly pseudoconvex domain $\Omega$ for $f \in L^q(\Omega)$, $q>2$ has a unique solution.
\end{theorem}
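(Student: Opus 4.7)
The plan is to follow the standard Ko\l odziej-type approximation scheme, with the stability estimate of Proposition \ref{stab} doing the heavy lifting. First I would approximate the data: pick a sequence $\phi_j \in C(\partial \Omega)$ converging uniformly to $\phi$ (one may even take them smooth) and a sequence of strictly positive, continuous (or smooth) functions $f_j$ converging to $f$ in $L^q(\Omega)$. This is possible because $C(\overline{\Omega})$ is dense in $L^q(\Omega)$; by truncating and adding a small positive constant we may assume each $f_j$ is bounded away from zero and $\|f_j\|_{L^q(\Omega)}$ is uniformly bounded by $\|f\|_{L^q(\Omega)} + 1$.

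Next, for each index $j$ I would invoke Alesker's existence theorem from \cite{A03b} (the Dirichlet problem with continuous density continuous up to the boundary and continuous boundary data in a quaternionic strictly pseudoconvex domain) to obtain $u_j \in \qpsh(\Omega) \cap C(\overline{\Omega})$ solving
\[
(\partial \jpar u_j)^n = f_j \, \Omega_n, \qquad u_j|_{\partial \Omega} = \phi_j.
\]
By Theorem \ref{inf} applied uniformly to this family, the sequence $\|u_j\|_{L^\infty(\Omega)}$ is bounded by a constant depending only on $q$, $\|f\|_{L^q(\Omega)} + 1$, $\sup_j \|\phi_j\|_{L^\infty(\partial \Omega)}$ and $\operatorname{diam}(\Omega)$.

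The crucial step is Proposition \ref{stab}: applied to $u_j$ and $u_k$ it yields
\[
\|u_j - u_k\|_{L^\infty(\Omega)} \leq \sup_{\partial \Omega}|\phi_j - \phi_k| + C(q,\operatorname{diam}(\Omega)) \|f_j - f_k\|_{L^q(\Omega)}^{1/n}.
\]
The right hand side tends to zero as $j,k \to \infty$, so $\{u_j\}$ is Cauchy in $C(\overline{\Omega})$ and converges uniformly to some $u \in C(\overline{\Omega})$. As a uniform limit of $qpsh$ functions, $u \in \qpsh(\Omega)$, and on the boundary $u|_{\partial \Omega} = \lim \phi_j = \phi$. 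Finally, by the continuity of the quaternionic Monge-Amp\`ere operator along uniformly convergent sequences of $qpsh$ functions (see \cite{WW17}), $(\partial \jpar u_j)^n \to (\partial \jpar u)^n$ weakly, while $f_j \Omega_n \to f\Omega_n$ weakly by $L^q \subset L^1$ convergence. Therefore $(\partial \jpar u)^n = f\Omega_n$.

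Uniqueness is immediate: if $u_1, u_2$ are two solutions then the comparison principle from \cite{WZ15} (stated in the preliminaries) applied in both directions gives $u_1 = u_2$. I do not expect a serious obstacle in executing the plan; the only delicate point is ensuring that the approximating problems can be solved within the framework already available, which is why one wants the $f_j$ to be strictly positive and continuous on $\overline{\Omega}$ so that Alesker's theorem applies directly, and the main non-trivial input is the stability estimate, whose availability is exactly what turns the rest of the argument into a routine limiting procedure.
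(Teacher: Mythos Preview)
Your proposal is correct and follows essentially the same approximation scheme as the paper: approximate $f$ by continuous densities, solve via Alesker's theorem, use Proposition~\ref{stab} to get a Cauchy sequence in $C(\overline{\Omega})$, and pass to the limit using weak continuity of the Monge--Amp\`ere operator under uniform convergence. The only (harmless) differences are that you also approximate $\phi$ and insist on strictly positive $f_j$, neither of which is needed since $\phi$ is already continuous and Alesker's result covers non-negative continuous densities; the explicit appeal to Theorem~\ref{inf} is likewise redundant once the Cauchy property is in hand.
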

\begin{proof}
Uniqueness follows from the comparison principle. For solvability we take a sequence of continuous non negative functions $f_i$ converging to $f$ in $L^q(\Omega)$. Solving Dirichlet problems for them with our boundary condition, which is possible due to \cite{A03b}, gives a sequence of continuous solutions $u_i$. Since, by Proposition \ref{stab}, these solutions constitute a Cauchy sequence it follows that $u_i$ converge uniformly to some $u$. This is the solution we were looking for because of convergence of Monge-Amp\`ere masses, cf. \cite{WW17}. 
\end{proof}
The example below shows that the exponent two is optimal in the sense that for densities in $L^p(\Omega)$ with $p<2$ solutions may not even be bounded. 
\begin{proposition}
Let $f(q)=\log \left( \parallel q \parallel \right)$, it belongs to $\qpsh(\hn)$ and $$(\partial \jpar f)^n=\frac{n!}{2 \parallel q \parallel^{2n}} \Omega_{n}.$$
\end{proposition}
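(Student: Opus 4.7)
The function $f=\log\parallel q\parallel$ equals $\tfrac12\log|z|^2$ in the complex coordinates of Section~2, since $|z|^2=\sum_{j=0}^{2n-1}|z_j|^2=\parallel q\parallel^2$. This is the classical $\psh$ function on $\cc^{2n}$, so by the remark in Section~2 it belongs to $\qpsh(\hn)$. For the Monge-Amp\`ere identity the plan is to first compute the density where $f$ is smooth, i.e.\ on $\hn\setminus\{0\}$, and then pass to the origin by approximation.

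On $\hn\setminus\{0\}$ I would plug the classical complex Hessian $\partial_{z_i}\partial_{\overline{z_j}}f=\tfrac{\delta_{ij}}{2\parallel q\parallel^2}-\tfrac{z_j\overline{z_i}}{2\parallel q\parallel^4}$ into Lemma~\ref{q-c}. A short computation with $q_i=z_{2i}+\jj z_{2i+1}$, $\overline{q_i}=\overline{z_{2i}}-\jj z_{2i+1}$ and the rule $\jj z=\overline{z}\jj$ for $z\in\cc$ produces the identity
\[q_l\overline{q_k}=(z_{2l}\overline{z_{2k}}+z_{2k+1}\overline{z_{2l+1}})+\jj(z_{2l+1}\overline{z_{2k}}-z_{2k+1}\overline{z_{2l}}),\]
which recognises the quaternionic Hessian as the rank-one perturbation
\[Hess(f,\hh)=\tfrac{4}{\parallel q\parallel^2}I_n-\tfrac{2}{\parallel q\parallel^4}\,q\,q^{*},\]
where $q$ is viewed as a column vector in $\hn$ and $q^{*}=\overline{q}^T$.

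The Moore determinant is invariant under conjugation $M\mapsto UMU^{*}$ by hyperhermitian unitary matrices (equivalently, it is the product of the real eigenvalues of the hyperhermitian matrix), so I would pick $U$ sending $q$ to $\parallel q\parallel e_1$. Then $UHess(f,\hh)U^{*}$ becomes diagonal with entries $(2/\parallel q\parallel^2,\,4/\parallel q\parallel^2,\ldots,4/\parallel q\parallel^2)$, hence of Moore determinant $\tfrac{4^n}{2\parallel q\parallel^{2n}}$. Combining with the formula $(\partial\jpar f)^n=\tfrac{n!}{4^n}\det Hess(f,\hh)\,\Omega_n$ recalled in Section~2 yields $(\partial\jpar f)^n=\tfrac{n!}{2\parallel q\parallel^{2n}}\Omega_n$ on $\hn\setminus\{0\}$.

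To extend the identity across the origin I would approximate $f$ by the smooth $\qpsh$ functions $f_\epsilon=\tfrac12\log(\parallel q\parallel^2+\epsilon)$, which decrease to $f$ as $\epsilon\searrow 0$. The same diagonalisation now gives $(\partial\jpar f_\epsilon)^n=\tfrac{n!(g+2\epsilon)}{2(g+\epsilon)^{n+1}}\Omega_n$ with $g=\parallel q\parallel^2$. Splitting the density as $\tfrac{g}{(g+\epsilon)^{n+1}}+\tfrac{2\epsilon}{(g+\epsilon)^{n+1}}$, the first piece is dominated by $g^{-n}$ (which is locally integrable on $\hn\cong\rr^{4n}$ for $n\geq 1$) and converges pointwise to $g^{-n}$, so by dominated convergence it converges weakly to $g^{-n}$. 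The second piece has vanishing mass on any compact set: the substitution $s=\sqrt\epsilon\tau$ in spherical coordinates shows its ball-integral is $O(\epsilon)$ for $n\geq 2$ and $O(\epsilon\log(1/\epsilon))$ for $n=1$. Therefore $(\partial\jpar f_\epsilon)^n$ converges weakly to $\tfrac{n!}{2\parallel q\parallel^{2n}}\Omega_n$, and the weak continuity of the quaternionic Monge-Amp\`ere operator along decreasing sequences of $\qpsh$ functions from~\cite{WW17} identifies this weak limit with $(\partial\jpar f)^n$, with no Dirac contribution at $0$. The main technical step is the algebraic identification of the Hessian as a rank-one perturbation; the Moore-determinant computation and the approximation that follow are routine.
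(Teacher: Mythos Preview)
Your argument is correct and reaches the same intermediate formula $(\partial\jpar f_\epsilon)^n=\frac{n!(\parallel q\parallel^2+2\epsilon)}{2(\parallel q\parallel^2+\epsilon)^{n+1}}\Omega_n$ as the paper, but the route to it is genuinely different. The paper computes $\partial\jpar f_\epsilon$ directly as a $(2,0)$-form in the coordinates $dz_i$, then expands the $n$-fold wedge power as a signed sum over the terms $M_{ij}=(-1)^i z_{i+(-1)^i}\overline{z_j}+(-1)^{j+1}z_{j+(-1)^j}\overline{z_i}$ and proves by a brute-force algebraic identity that every summand containing two or more $M$-factors cancels, leaving only the zeroth- and first-order contributions. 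Your approach instead passes through the quaternionic Hessian via Lemma~\ref{q-c}, recognises it as the rank-one perturbation $\frac{4}{\parallel q\parallel^2+\epsilon}I_n-\frac{2}{(\parallel q\parallel^2+\epsilon)^2}qq^*$, and evaluates its Moore determinant by unitary diagonalisation in $Sp(n)$. This is cleaner and more conceptual: the cancellation that the paper verifies term by term is absorbed into the single fact that $qq^*$ has rank one, and the eigenvalue computation replaces a page of index bookkeeping. The trade-off is that you invoke two external facts not recorded in the paper --- the $Sp(n)$-invariance of the Moore determinant and the transitivity of $Sp(n)$ on spheres --- whereas the paper's expansion is entirely self-contained within the $d_0,d_1/\partial,\jpar$ formalism already set up. For the passage to the limit the paper simply notes the uniform domination $\frac{\parallel q\parallel^2+2\epsilon}{(\parallel q\parallel^2+\epsilon)^{n+1}}\leq\parallel q\parallel^{-2n}$ and applies dominated convergence in one stroke; your splitting into two pieces with a separate mass estimate for the $\epsilon$-term is also correct but slightly longer than necessary.
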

\begin{proof} We compute for $f_\epsilon(q)= \frac{1}{2}\log \left( \parallel q \parallel^2 + \epsilon \right)$
\begin{center} $\partial \jpar f_\epsilon = \partial \left( \sum\limits_{i=0}^{2n-1} (-1)^{i+1} (\partial_{\overline{z_{i+(-1)^i}}}f_\epsilon) dz_i \right)=
\frac{1}{2}\partial \left(\sum\limits_{i=0}^{2n-1} (-1)^{i+1} \frac{z_{i+(-1)^i}}{\parallel q \parallel^2 + \epsilon} dz_i \right)= 
\frac{1}{2} \left(\sum\limits_{i=0,j=0}^{2n-1} (-1)^{i+1} \frac{\delta^j_{i+(-1)^i}(\parallel q \parallel^2 + \epsilon) -z_{i+(-1)^i} \overline{z_j}}{(\parallel q \parallel^2+\epsilon)^2} dz_j \wedge dz_i \right)=
\frac{1}{2} \left(\sum\limits_{i>j}^{2n-1} \left((-1)^{i+1} \frac{\delta^j_{i+(-1)^i}(\parallel q \parallel^2 + \epsilon) -z_{i+(-1)^i} \overline{z_j}}{(\parallel q \parallel^2+\epsilon)^2} - (-1)^{j+1} \frac{\delta^i_{j+(-1)^j}(\parallel q \parallel^2 + \epsilon) -z_{j+(-1)^j} \overline{z_i}}{(\parallel q \parallel^2 + \epsilon)^2} \right) dz_j \wedge dz_i \right)=
\frac{1}{2} \left(\sum\limits_{i>j}^{2n-1} \left( \frac{2 \delta^j_{i+(-1)^i}(\parallel q \parallel^2+\epsilon) + (-1)^{i} z_{i+(-1)^i} \overline{z_j} + (-1)^{j+1} z_{j+(-1)^j} \overline{z_i}}{(\parallel q \parallel^2 + \epsilon)^2} \right) dz_j \wedge dz_i \right).$
\end{center}
Let us denote by $M_{ij}= (-1)^{i} z_{i+(-1)^i} \overline{z_j} + (-1)^{j+1} z_{j+(-1)^j} \overline{z_i}$ as in \cite{WW17} and let $\delta^{j_1 i_1,...,j_n i_n}_{0,...,2n-1}$ be the sign of the permutation $(j_1, i_1,...,j_n, i_n) \rightarrow (0,1,...,2n-1)$. With this notation we see that

\begin{center} $2^n (\parallel q \parallel^2 + \epsilon)^{2n}(\partial \jpar f_\epsilon)^n=
\left( \sum\limits_{\substack{ j_1,i_1,...,j_n,i_n: \\ \{j_1,i_1,...,j_n,i_n\} = \{0,...,2n-1 \} \\ i_l>j_l, \: l \in \{1,...,n\} }} \left( \delta^{j_1 i_1,...,j_n i_n}_{0,...,2n-1} \prod\limits_{l \in \{1,...,n \}} \left(2 \delta^{j_l}_{i_l+(-1)^{i_l}} (\parallel q \parallel^2+\epsilon) + M_{i_l j_l} \right) \right) \right) \Omega_{n} =
{n \choose 0} \sum\limits_{\{k_1,...,k_n\}=\{0,...,n-1\}} \delta^{(2k_1) (2k_1+1),...,(2k_n) (2k_n+1)}_{0,...,2n-1} 2^n (\parallel q \parallel^2+\epsilon)^{n} +$ 
$ {n \choose 1} \sum\limits_{\substack{\{j_1,i_1,2k_2,2k_2+1,...,2k_n,2k_n+1\} = \{0,...,2n-1\} \\ i_1>j_1 \\ k_l \in \{0,...,n-1\}}} \delta^{j_1 i_1,...,(2k_n) (2k_n+1)}_{0,...,2n-1} 2^{n-1} (\parallel q \parallel^2+\epsilon)^{(n-1)} M_{i_1 j_1} +$
$ {n \choose 2} \sum\limits_{\substack{\{j_1,i_1,j_2,i_2,...,2k_n,2k_n+1\} = \{0,...,2n-1\} \\ i_1>j_1, i_2>j_2 \\ k_l \in \{0,...,n-1\}}} \delta^{j_1 i_1,j_2,i_2,...,(2k_n) (2k_n+1)}_{0,...,2n-1} 2^{n-2} (\parallel q \parallel^2+\epsilon)^{(n-2)} M_{i_1 j_1}M_{i_2 j_2} +$ \\
$ ... $ $+$ \\
$ {n \choose n} \sum\limits_{\substack{\{j_1,i_1,...,j_n,i_n\} = \{0,...,2n-1 \} \\ i_l>j_l, \: l \in \{1,...,n\}}} \delta^{j_1 i_1,...,j_n i_n}_{0,...,2n-1} \prod\limits_{l \in \{1,...,n \}} M_{i_l j_l}$. \end{center} Note that for a fixed indexes $j_3,i_3,...,j_n,i_n$ the expression \begin{center} $M'_{j_3,i_3,...,j_n,i_n}=\sum\limits_{\substack{j_1,i_1,j_2,i_2: \\ \{j_1,i_1,j_2,i_2,...,2k_n,2k_n+1\} = \{0,...,2n-1\} \\ i_1>j_1, i_2>j_2 }} \delta^{j_1 i_1,...,j_n i_n}_{0,...,2n-1} M_{i_1 j_1}M_{i_2 j_2}$ \end{center} vanish, this was already noticed in \cite{WW17}. To see this let $\{0,...,2n-1\} \setminus \{j_3,i_3,...,j_n,i_n\}=\{k,l,m,n\}$ and $k>l>m>n$. Then 

\begin{center} $\frac{1}{2} M'_{j_3,i_3,...,j_n,i_n} = \delta^{ l k, nm, j_3 i_3,...,j_n i_n}_{0,...,2n-1} M_{kl}M_{mn} + \delta^{ mk, nl, j_3 i_3,...,j_n i_n}_{0,...,2n-1} M_{km}M_{ln} + \delta^{ nk, ml, j_3 i_3,...,j_n i_n}_{0,...,2n-1} M_{kn}M_{lm}= 
\delta^{ l k, nm, j_3 i_3,...,j_n i_n}_{0,...,2n-1} \left( M_{kl}M_{mn} - M_{km}M_{ln} +  M_{kn}M_{lm} \right) =$ 
$\pm ( 
((-1)^{k} z_{k+(-1)^k} \overline{z_l} + (-1)^{l+1} z_{l+(-1)^l} \overline{z_k} )     ( (-1)^{m} z_{m+(-1)^m} \overline{z_n} + (-1)^{n+1} z_{n+(-1)^n} \overline{z_m} ) 
-((-1)^{k} z_{k+(-1)^k} \overline{z_m} + (-1)^{m+1} z_{m+(-1)^m} \overline{z_k})     ((-1)^{l} z_{l+(-1)^l} \overline{z_n} + (-1)^{n+1} z_{n+(-1)^n} \overline{z_l}) 
+((-1)^{k} z_{k+(-1)^k} \overline{z_n} + (-1)^{n+1} z_{n+(-1)^n} \overline{z_k})      ((-1)^{l} z_{l+(-1)^l} \overline{z_m} + (-1)^{m+1} z_{m+(-1)^m} \overline{z_l}) 
)=
\pm (
{(-1)^{k+m+1} z_{k+(-1)^k} \overline{z_n} z_{m+(-1)^m} \overline{z_l}} + {(-1)^{k+m} z_{k+(-1)^k} \overline{z_l} z_{m+(-1)^m} \overline{z_n}} + 
{(-1)^{l+m+1} z_{l+(-1)^l} \overline{z_k} z_{m+(-1)^m} \overline{z_n}}  + {(-1)^{m+l} z_{m+(-1)^m} \overline{z_k} z_{l+(-1)^l} \overline{z_n}} +
(-1)^{k+l+1} z_{k+(-1)^k} \overline{z_m} z_{l+(-1)^l} \overline{z_n} +  (-1)^{k+l} z_{k+(-1)^k} \overline{z_n}z_{l+(-1)^l} \overline{z_m} +  
{(-1)^{k+n+1} z_{k+(-1)^k} \overline{z_l} z_{n+(-1)^n} \overline{z_m}} + {(-1)^{k+n} z_{k+(-1)^k} \overline{z_m} z_{n+(-1)^n} \overline{z_l}} + 
{(-1)^{m+n+1} z_{m+(-1)^m} \overline{z_k} z_{n+(-1)^n} \overline{z_l}} +  {(-1)^{n+m} z_{n+(-1)^n} \overline{z_k} z_{m+(-1)^m} \overline{z_l}} + 
{(-1)^{n+l+1} z_{n+(-1)^n} \overline{z_k} z_{l+(-1)^l} \overline{z_m}} + {(-1)^{l+n} z_{l+(-1)^l} \overline{z_k} z_{n+(-1)^n} \overline{z_m}} ) = 0$.  \end{center}
Because of that only the first two summands of the expression for $(\partial \jpar f)^n$ do not vanish. We are left with
\begin{center} $(\partial \jpar f_\epsilon)^n = \frac{1}{2^n (\parallel q \parallel^2+\epsilon)^{2n}} \left( n! 2^n (\parallel q \parallel^2+\epsilon)^{n} - n! 2^{n-1} (\parallel q \parallel^2+\epsilon)^{(n-1)} \parallel q 
\parallel^2 \right) \Omega_{n} = \frac{n!(\parallel q \parallel^2 + 2 \epsilon)}{2 (\parallel q \parallel^2+ \epsilon )^{n+1}} \Omega_{n}$.\end{center} Finally since measures $(\partial \jpar f_\epsilon)^n$ converge weakly to 
$(\partial \jpar f)^n$, cf. \cite{WW17}, it is enough to find the weak limit of $\frac{n!(\parallel q \parallel^2 + 2\epsilon)}{2 (\parallel q \parallel^2+\epsilon)^{n+1}}$ which by $\frac{n!(\parallel q \parallel^2 + 
2\epsilon)}{2 (\parallel q \parallel^2+\epsilon)^{n+1}} \leq \frac{n!}{2 \parallel q \parallel^{2n}}$ and Lebesgue's Dominated Convergence theorem is $\frac{n!}{2 \parallel q \parallel^{2n}}$, exactly as we wanted.
\end{proof}

\noindent FACULTY OF MATHEMATICS AND COMPUTER SCIENCE\\
OF JAGIELLONIAN UNIVERSITY \\
\L OJASIEWICZA  6 \\
30-348, KRAK\'OW \\
POLAND \\
\textit{E-mail address:} Marcin.sroka@im.uj.edu.pl

\end{document}